\documentclass[11pt,a4paper,bibliography=totocnumbered,listof=totocnumbered]{article}
\usepackage[ngerman,english]{babel}
\usepackage[T1]{fontenc}
\usepackage[utf8]{inputenc}
\usepackage{amsmath}
\usepackage{mathtools}
\usepackage{amsthm}
\usepackage{amsfonts}
\usepackage{amssymb}
\usepackage{graphicx} %includegraphics 
\usepackage{geometry} %Seitenformat
\usepackage{setspace} %Zeilenabstand
\usepackage{subfig} % wird benötigt (unbekannt warum)

\usepackage{paralist} %enumerate modifikation
\usepackage{enumitem}

\usepackage[font=footnotesize,labelfont=bf]{caption}

\usepackage{titlesec} %titlespacing
\usepackage[colorlinks, linkcolor = blue, citecolor = blue, urlcolor = blue]{hyperref}
\usepackage[toc,page]{appendix}
\usepackage{cite}
\usepackage{bbm}
\usepackage{dsfont}

\newcommand{\mE}{\mathcal{E}}

\DeclareMathOperator{\supp}{supp}

\newtheoremstyle{normal}% normale Schrift
{10pt}% hSpace abovei
{10pt}% hSpace belowi
{}% hBody fonti
{}% hIndent amounti1
{\bfseries}% hTheorem head fonti
{}% Punctuation after theorem headi
{0em}% hSpace after theorem headi2
{\bfseries{\thmname{#1}\thmnumber{ #2}\thmnote{ \hspace{0em}(#3)\newline}}}% hTheorem head spec

\newtheoremstyle{standard}  % follow `plain` defaults but change HEADSPACE.
  {10pt}   % ABOVESPACE
  {}   % BELOWSPACE
  {\itshape}  % BODYFONT
  {}       % INDENT (empty value is the same as 0pt)
  {\bfseries} % HEADFONT
  {:}         % HEADPUNCT
  {0.2cm}  % HEADSPACE. `plain` default: {5pt plus 1pt minus 1pt}
  {}          % CUSTOM-HEAD-SPEC

\newtheoremstyle{mittitel}  % follow `plain` defaults but change HEADSPACE.
  {10pt}   % ABOVESPACE
  {}   % BELOWSPACE
  {\itshape}  % BODYFONT
  {}       % INDENT (empty value is the same as 0pt)
  {\bfseries} % HEADFONT
  {:}         % HEADPUNCT
  {0.2cm}  % HEADSPACE. `plain` default: {5pt plus 1pt minus 1pt}
  {\bfseries{\thmname{#1}\thmnumber{ #2}\thmnote{ \hspace{0em}(#3)\newline}}}          % CUSTOM-HEAD-SPEC

\geometry{a4paper, top=25mm, left=25mm, right=25mm, bottom=25mm, headsep=10mm, footskip=12mm}

\title{An Approximation of Solutions to Heat Equations defined by Generalized Measure Theoretic Laplacians \vspace{-1ex} }
\author{Tim Ehnes\footnote{ Institute of Stochastics and Applications, University of Stuttgart, Pfaffenwaldring 57, 70569 Stuttgart, Germany, Email: tim.ehnes@mathematik.uni-stuttgart.de. }~ and Ben Hambly\footnote{ Mathematical Institute, University of Oxford, Woodstock Road, Oxford, OX2 6GG, UK, Email: hambly@
maths.ox.ac.uk. }}
\date{\vspace{-5ex}}

\begin{document}
\maketitle

\setlength{\parindent}{10pt}

% Abstände Überschrift
\titlespacing{\section}{0pt}{25pt plus 4pt minus 2pt}{18pt plus 2pt minus 12pt}
\titlespacing{\subsection}{0pt}{25pt plus 4pt minus 2pt}{14pt plus 2pt minus 2pt}
\titlespacing{\subsubsection}{0pt}{12pt plus 4pt minus 2pt}{-6pt plus 2pt minus 2pt}

\theoremstyle{standard}

\newtheorem{thm}{Theorem}[section] %\numberwithin{thm}{section}
\newtheorem{satz}[thm]{Proposition} 
\newtheorem{lem}[thm]{Lemma}
\newtheorem{kor}[thm]{Corollary} 
\newtheorem{defi}[thm]{Definition} 
\newtheorem{bem}[thm]{Remark}
\newtheorem{hyp}[thm]{Assumption}
\newtheorem{exa}[thm]{Example}
% ----------------------------------------------------------------------------------------------------------
% Einleitung
% ----------------------------------------------------------------------------------------------------------

\begin{abstract}
We consider the heat equation defined by a generalized measure theoretic Laplacian on $[0,1]$. This equation describes heat diffusion in a bar 
such that the mass distribution of the bar is given by a non-atomic Borel probabiliy measure $\mu$, where we do not assume the existence of a 
strictly positive mass density. We show that weak measure convergence implies convergence of the corresponding generalized Laplacians in the 
strong resolvent sense. We prove that strong semigroup convergence with respect to the uniform norm follows, which implies uniform convergence 
of solutions to the corresponding heat equations. This provides, for example, an interpretation for the mathematical model of heat diffusion 
on a bar with gaps in that the solution to the corresponding heat equation behaves approximately like the heat flow on a bar with sufficiently 
small mass on these gaps.
\end{abstract}

\section{Introduction}\label{Introduction}

Let $[a,b]\subset \mathbb{R}$ be a finite interval, $\mu$ be a non-atomic Borel probability measure on $[a,b]$ such that $a,b\in\supp(\mu)$, $\mathcal{L}^2([a,b],\mu)$ be the space of measurable functions $f$ such that $\int_a^b f^2d\mu<\infty$ and $L^2([a,b],\mu)$ be the corresponding Hilbert space of equivalence classes with inner product $\langle f,g\rangle_{\mu}\coloneqq \int_a^b fgd\mu$. We define
\begin{align*}
\mathcal{D}_{\mu}^2\coloneqq \Big\lbrace f\in C^1([a,b]):~ &\exists \left(f^{\prime}\right)^{\mu}\in L^2([a,b],\mu):~f^{\prime}(x)=f^{\prime}(a)+\int_a^x \left(f^{\prime}\right)^{\mu}(y)d\mu(y), ~~ x\in[a,b]\Big\rbrace.
\end{align*}
The Krein-Feller operator with respect to $\mu$ is given as 
\begin{align*}
\Delta_{\mu}: \mathcal{D}_{\mu}^2\subseteq L^2([a,b],\mu)\to L^2([a,b],\mu),~~ f\mapsto\left(f^{\prime}\right)^{\mu}.
\end{align*}
This definition involves the derivative with respect to $\mu$. If a function $f$ has a representation given by
\begin{align*}
f(x)=\int_a^x \frac{d }{d\mu}f(x)d\mu(x), ~ x\in[0,1],
\end{align*}
then $\frac{d }{d\mu}f$ is called the $\mu$-derivative of $f$.
Consequently, in the above definition, $\left(f^{\prime}\right)^{\mu}$ is the $\mu$-derivative of $f'$. \smallskip

This operator has been widely studied, for example with an emphasis on addressing questions of the spectral asymptotics and further analytical properties \cite{BNF,CNE,FA,FD,FAF,FAS,FG,FLZ,FME,FP,FR,FS,FZH,FRO,MS2,MS,NS,NTS}, diffusion processes \cite{IKD,KO,KS}, wave equations \cite{CJI} and higher-dimensional generalizations  \cite{FSD,NST,SVO}. \par

%This operator has been introduced, for example, in \cite{FAF,IKD,KS,KO,LG}, especially as the infinitesimal generator of a so-called quasi-diffusion. It is a measure-theoretic generalization of the classical second weak derivative $\Delta_{\lambda^1}$, where $\lambda^1$ is the one-dimensional Lebesgue measure. \par
In order to connect these operators with diffusion equations from a physical point of view, we follow for example \cite[Section 1.2]{HE} and consider a 
metallic rod of constant cross-sectional area oriented in the $x$-direction occupying a region from $x=0$ to $x=1$ such that all thermal quantities 
are constant across a section. We can thus consider the rod as one-dimensional. We investigate the conduction of heat energy on a segment from $x=a$ to $x=b$. Let the temperature at the point $x\in[a,b]$ and time $t\in[0,\infty)$ denoted by $u(t,x)$ and the total thermal energy in the considered segment at time $t$ by $e_{a,b}(t)$. It is well-known that
\begin{align*}
e_{a,b}(t)=\int_a^b u(t,x)\rho(x)dx,
\end{align*}
assuming that the rod possesses a mass density $\rho:[0,1]\to(0,\infty)$. However, if we denote the mass distribution of the rod by $\mu$, we can write
\begin{align*}
e_{a,b}(t)=\int_a^b u(t,x)d\mu(x).
\end{align*}
Hence, we can define the total heat energy even if $\mu$ has no density. The total heat energy changes only if heat energy flows through the boundaries $x=a$ and $x=b$. We deduce for the rate of change of heat energy
\begin{align}\label{physic_intro_1}
\frac{d}{dt} e_{a,b}(t)=\phi(t,a)-\phi(t,b),
\end{align}
where $\phi(t,x)$ denotes the heat flux density, which gives the rate of thermal energy flowing through $x$ at time $t$ to the right. 
%Accordingly, negative heat flux density means flow to the left.
Assuming sufficient regularity, we can rewrite \eqref{physic_intro_1} as
\begin{align*}
\int_a^b \frac{\partial}{\partial t} u(t,x)d\mu(x) = -\int_a^b \frac{d}{d\mu}\phi_t(x)d\mu(x),
\end{align*}
where $\phi_t(x)\coloneqq \phi(t,x)$ and the $\mu$-derivative was defined earlier.
With $u_t(x)\coloneqq u(x,t)$, Fourier's law of heat conduction $\phi=-\frac{\partial u}{\partial x}$ gives  
\begin{align*}
\int_a^b \frac{\partial}{\partial t} u(t,x)d\mu(x) = \int_a^b \frac{d}{d\mu}\frac{d}{d x} u_t(x)d\mu(x).
\end{align*}
Since this is valid for all $a,b\in[0,1],$ $a<b$, it follows for $t\in[0,\infty)$ and $\mu$-almost all $x\in[0,1]$
\begin{align*}
\frac{\partial}{\partial t} u(t,x) =  \frac{d}{d\mu}\frac{d}{d x} u_t(x).
\end{align*}
Applying the definition of the Krein-Feller operator leads to the generalized heat equation
\begin{align}
\frac{\partial u}{\partial t} = \Delta_{\mu} u_t, ~~ t\in[0,\infty) \label{heat_equation_intro}
\end{align}
with Dirichlet boundary conditions $u(t,0)=u(t,1)=0$ for all $t\geq 0$ if we assume that the temperature vanishes at the boundaries or with Neumann boundary conditions 
$\frac{\partial u}{\partial x}(t,0)=\frac{\partial u}{\partial x}(t,1)=0$ if the boundaries are assumed to be perfectly insulated.
This provides a physical motivation for a mass distribution having full support even if it possesses no Lebesgue density. However, it is still not clear how to interpret the equation if the support of the mass distribution is not the whole interval, in particular for singular measures, such as measures on the Cantor set. \smallskip

The problem then is to describe heat flow on a rod with massless parts. Krein-Feller operators defined by measures on the classic Cantor set or, more general, 
Cantor-like sets with gaps have been extensively studied in recent years (see e.g. \cite{AM,FLZ,FME,FS,FR}). In this paper, we give an interpretation of a solution to \eqref{heat_equation_intro} 
in the case where $\mu$ is not of full support. We approximate the solution by a sequence of solutions to heat equations defined by $\mu_n$ for $n\in\mathbb{N}$ such that $\mu_n$ is of full support and converges weakly to $\mu$ for $n\to\infty$. \smallskip

To this end, let $b\in\{N,D\}$ represent the boundary condition, where $N$ denotes Neumann and $D$  Dirichlet boundary conditions and we give our basic assumption.
\begin{hyp}\label{ass:1}
Let $\left(\mu_n\right)_{n\in\mathbb{N}}$ be a sequence of non-atomic Borel probability measures on $[0,1]$ such that $0,1\in \supp(\mu_n)$ and $\mu_n \rightharpoonup \mu, n\to\infty$, where $\rightharpoonup$ denotes weak measure convergence.
\end{hyp}
%Further, let $\{u(t)\in X: t\geq 0\}$ be the unique solution to 
%\begin{align*} 
%\Delta_{\mu}^bu=\frac{\partial u}{\partial t},
%u_0\in X
%\end{align*}
% on an appropriate space $X$, where $N$ denotes Neumann and $D$  Dirichlet boundary conditions. Neumann or Dirichlet boundary conditions. Analogously, let  $\{u_n(t)\in X: t\geq 0\}$ be the unique solution
%to
%%\begin{align}
%%\begin{split}
%%\frac{\partial u}{\partial t}(t) &= \Delta_{\mu}^b u(t), ~~ t\in[0,\infty)\\
%%u(0)&=f
%%\end{split}
%%\end{align}
%converges to   to
%\begin{align*} 
%\Delta_{\mu_n}u_n=\frac{\partial u_n}{\partial t},
%\end{align*}
%%\begin{align}
%%\begin{split}
%%\frac{\partial u}{\partial t}(t) &= \Delta_{\mu_{\varepsilon}}^b u(t), ~~ t\in[0,\infty)\\
%%u(0)&=f,
%%\end{split}
%%\end{align}
%with Neumann or Dirichlet boundary conditions and an appropriate initial condition,
%as $n$ goes to $\infty$.
%This convergence result can then be applied to the case where $\supp(\mu)\neq [0,1]$ and $\supp(\mu_{n})= [0,1]$. 
It is well-known that $\Delta_{\mu}^b$ is a non-positive self-adjoint operator (see, e.g.,  \cite{FA}) and thus the generator of a strongly continuous semigroup $\left(T_t^b\right)_{t\geq 0}$ (see, e.g. \cite[Lemma 1.3.2]{FOD}).
If $u_0\in L^2([0,1],\mu)$, then the unique solution to the initial value problem 
\begin{align}
\begin{split}
\frac{\partial u}{\partial t}(t) &= \Delta_{\mu}^b u(t), ~~ t\in[0,\infty),\\
u(0)&=u_0
\end{split}
\end{align}
is given by $u(t)=T_t^b u_0$, according to a generalized solution concept we introduce later.
This motivates the investigation of strong semigroup convergence. However, for different measures, the corresponding semigroups are defined on different spaces. For the special case $\supp(\mu)=\supp(\mu_n)=[0,1]$ for all $n\in\mathbb{N}$, the results in \cite{CS} can be applied to obtain strong semigroup convergence on the space of continuous functions on $[0,1]$. 
 To formulate a strong semigroup convergence result without that assumption, we restrict the semigroup $\left( T_t^N\right)_{t\geq 0}$ associated to $\Delta_{\mu}^N$ on $L^2([0,1],\mu)$ to the subspace of continuous functions, denoted by $C([0,1])_{\mu}^N$, which is a Banach space with the uniform norm. The semigroup $\left( T_t^D\right)_{t\geq 0}$ is restricted to the Banach space of continuous functions satisfying Dirichlet boundary conditions, denoted by $C([0,1])_{\mu}^D$.
  We show that the restricted semigroup, which we denote by $\left( \bar T_t^b\right)_{t\geq 0}$, is, again, a strongly continuous contraction semigroup 
and the infinitesimal generator is given by
\begin{align*}
\bar \Delta_{\mu}^b f \coloneqq \Delta_{\mu}^b f, ~~ \mathcal{D}\left(\bar \Delta_{\mu}^b\right)\coloneqq \left\{ f\in\mathcal{D}\left(\Delta_{\mu}^b\right): \Delta_{\mu}^bf\in C[0,1])_{\mu}^b\right\}.
\end{align*} 
Moreover, if we assume that $\supp(\mu)\subseteq \supp(\mu_n)$, the space $C([0,1])_{\mu}^b$ can be continuously embedded in $C([0,1])_{\mu_n}^b$, where we denote the embedding by $\pi_n$. We will see that in this case, strong semigroup convergence is equivalent to strong resolvent convergence and strong resolvent convergence is what we will establish.
  More precisely, let $f\in C([0,1])_{\mu}^b,$ $\lambda>0$ and $n\in\mathbb{N}$. We define $\bar R_{\lambda}^b\coloneqq \left(\lambda-\bar \Delta_{\mu}^b\right)^{-1} $ and $\bar R_{\lambda,n}^b\coloneqq \left(\lambda-\bar \Delta_{\mu_n}^b\right)^{-1} $ and prove
\begin{align}\label{eq:resolventconv}
\left\lVert \pi_n \bar R_{\lambda}^bf-\bar R_{\lambda,n}^b\pi_n f\right\rVert_{\infty}
\to 0, ~~ n\to\infty .
\end{align}
The main tool for proving \eqref{eq:resolventconv} is the generalization of the hyperbolic functions $\sinh$ and $\cosh$, defined by generalizing the series
\begin{align*}
\sinh(zx) = \sum_{k=0}^{\infty}z^{2k+1}\frac{x^{2k+1}}{(2k+1)!}, ~~
\cosh(zx) = \sum_{k=0}^{\infty}z^{2k}\frac{x^{2k}}{(2k)!}.
\end{align*}
We replace $\frac{x^k}{k!}$ by generalized monomials defined by a measure $\mu$. This extends the theory of measure theoretic functions, developed for trigonometric functions in \cite{AM}. Then, we show that the resolvent density of the operator $\Delta_{\mu}^b$ is a product of such generalized hyperbolic functions. This leads to the desired strong resolvent convergence by proving convergence of these generalized hyperbolic functions.
We obtain
%for $t\geq 0$ and $f\in (C[0,1])_{\mu}$ 
%\begin{align*}
%\lim_{n\to\infty} \left\lVert \bar \pi_n \bar T_t^N f - \bar T_{t,n}^N\pi_n f\right\rVert_{\infty} = 0,
%\end{align*} 
%and for $f\in (C[0,1])_{\mu}^0$ 
%\begin{align*}
%\lim_{n\to\infty} \left\lVert \bar \pi_n \bar T_t^D f - \bar T_{t,n}^D\pi_n f\right\rVert_{\infty} = 0,
%\end{align*} 
%uniformly on bounded time intervals. 

\begin{thm}\label{strong_semigroup_con_theorem}
Let $f\in (C[0,1])_{\mu}^b$ and $\mu_n$ be a sequence of measures satisfying Assumption~\ref{ass:1}. Then, for all $t\geq 0$
\begin{align*}
\lim_{n\to\infty} \left\lVert \pi_n \bar T_t^b f - \bar T_{t,n}^b\pi_n f\right\rVert_{\infty} = 0,
\end{align*}
uniformly on bounded time intervals.
\end{thm}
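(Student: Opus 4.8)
\emph{Proof idea.}
The plan is to deduce the statement from the strong resolvent convergence \eqref{eq:resolventconv} by a Trotter--Kato type approximation argument for $C_0$-semigroups living on the varying family of Banach spaces $\bigl(C([0,1])^b_{\mu_n}\bigr)_{n}$, which are linked to $C([0,1])^b_\mu$ through the embeddings $\pi_n$. Recall from the discussion above that $\bar\Delta^b_\mu$ and each $\bar\Delta^b_{\mu_n}$ generate strongly continuous contraction semigroups $(\bar T^b_t)_{t\ge0}$ and $(\bar T^b_{t,n})_{t\ge0}$ (for the uniform norm), that the $\pi_n$ are bounded, uniformly in $n$, and that $\lVert\bar R^b_\lambda\rVert\le 1/\lambda$ and $\lVert\bar R^b_{\lambda,n}\rVert\le 1/\lambda$ for every $\lambda>0$.

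First I would establish \eqref{eq:resolventconv}. One writes the resolvent as an integral operator $(\bar R^b_\lambda f)(x)=\int_0^1 G^{b,\mu}_{\lambda}(x,y)\,f(y)\,d\mu(y)$, where the kernel $G^{b,\mu}_\lambda$ is obtained by solving $\lambda u-\Delta_\mu u=f$ under the boundary condition $b$ by variation of constants; hence $G^{b,\mu}_\lambda$ is an explicit expression in the generalized hyperbolic functions $\sh_\mu(\sqrt\lambda\,\cdot)$ and $\ch_\mu(\sqrt\lambda\,\cdot)$, which are defined by substituting the $\mu$-monomials for $x^k/k!$ in the series of $\sinh$ and $\cosh$ and which span the two-dimensional solution space of $\Delta_\mu u=\lambda u$. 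Weak convergence $\mu_n\rightharpoonup\mu$ together with $0,1\in\supp(\mu_n)$ forces the $\mu_n$-monomials --- alternating iterated integrals against Lebesgue measure and $\mu_n$ --- to converge uniformly on $[0,1]$ to the $\mu$-monomials (by induction on the degree, using P\'olya's theorem, i.e.\ uniform convergence of the distribution functions, which holds since $\mu$ is non-atomic), whence $\sh_{\mu_n}\to\sh_\mu$ and $\ch_{\mu_n}\to\ch_\mu$ uniformly and therefore $G^{b,\mu_n}_\lambda\to G^{b,\mu}_\lambda$ uniformly on $[0,1]^2$. Combining this uniform kernel convergence with the weak convergence of the integrating measures (via equicontinuity of the family of kernels and non-atomicity of $\mu$) then gives $\int_0^1 G^{b,\mu_n}_\lambda(x,y)(\pi_n f)(y)\,d\mu_n(y)\to\int_0^1 G^{b,\mu}_\lambda(x,y)f(y)\,d\mu(y)$ uniformly in $x$, which is \eqref{eq:resolventconv}.

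Granting \eqref{eq:resolventconv}, the conclusion follows from the Trotter--Kato approximation theorem for $C_0$-semigroups on a sequence of Banach spaces: its hypotheses --- uniform boundedness of the $\pi_n$, contractivity of the semigroups, and strong resolvent convergence for one (hence, by the resolvent identity, every) $\lambda>0$ --- have all been verified, and its conclusion is precisely $\sup_{t\in[0,T]}\lVert\pi_n\bar T^b_t f-\bar T^b_{t,n}\pi_n f\rVert_\infty\to0$ for every $T>0$. The mechanism behind this step is the Hille--Yosida formula
\[
\bar T^b_t f=\lim_{k\to\infty}\Bigl(\tfrac{k}{t}\,\bar R^b_{k/t}\Bigr)^{k}f,
\qquad
\bar T^b_{t,n}\pi_n f=\lim_{k\to\infty}\Bigl(\tfrac{k}{t}\,\bar R^b_{k/t,n}\Bigr)^{k}\pi_n f ,
\]
into which \eqref{eq:resolventconv} enters through the elementary telescoping identity $\pi_n(\bar R^b_\lambda)^k f-(\bar R^b_{\lambda,n})^k\pi_n f=\sum_{j=0}^{k-1}(\bar R^b_{\lambda,n})^{j}\bigl(\pi_n\bar R^b_\lambda-\bar R^b_{\lambda,n}\pi_n\bigr)(\bar R^b_\lambda)^{k-1-j}f$, the contraction bounds $\lVert\bar R^b_{\lambda,n}\rVert\le1/\lambda$ making the limit in $k$ uniform in $n$ and in $t$ on bounded intervals.

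The main obstacle is \eqref{eq:resolventconv}, specifically the analysis of the generalized hyperbolic functions: proving uniform convergence of the $\mu_n$-monomials under mere weak convergence, and controlling $G^{b,\mu_n}_\lambda$ --- including its boundary-condition-dependent part, built from the boundary-adapted solutions $\shb$/$\chb$, which behave delicately near the endpoints --- uniformly enough to pass to the limit against the measures $\mu_n$. By comparison the passage from resolvent convergence to semigroup convergence, uniform on bounded time intervals, is a soft step driven entirely by the contraction property.
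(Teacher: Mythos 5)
Your proposal is correct and follows essentially the same route as the paper: strong resolvent convergence is established exactly as you describe (resolvent kernels written as products of generalized hyperbolic functions, whose uniform convergence follows from uniform convergence of the distribution functions via P\'olya's theorem, combined with weak convergence of the integrating measures), and the passage to semigroup convergence is the Trotter--Kato/Ethier--Kurtz approximation theorem for contraction semigroups on varying Banach spaces linked by the embeddings $\pi_n$. The only cosmetic difference is that the paper enters that theorem through an intermediate graph-norm-convergence statement for the generators (obtained from the resolvent convergence by setting $f_n=\bar R^b_{\lambda,n}\pi_n(\lambda-\bar\Delta^b_\mu)f$), whereas you invoke the resolvent-convergence hypothesis of the same theorem directly.
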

After that, we will see that for $f\in(C[0,1])_{\mu}^b$ $\left\{u(t)=\bar T_t^bf: t\geq 0\right\}$ is the unique solution to the initial value problem 
\begin{align}\label{heat_equation_intro_2}
\begin{split}
\frac{\partial u}{\partial t}(t) &= \bar \Delta_{\mu}^b u(t), ~~ t\in[0,\infty), \\
u(0)&=f
\end{split}
\end{align}
in the sense that $t\mapsto u(t)$ satisfies \eqref{heat_equation_intro_2} for all $t>0$ and is continuous with respect to $(C[0,1])_{\mu}^b$ for all $t\geq 0$. Analogously, 
$\left\{u_n(t)=\bar T_{t,n}^bf: t\geq 0\right\}$ is the unique solution to the initial value problem 
\begin{align*}
\begin{split}
\frac{\partial u_n}{\partial t}(t) &= \bar \Delta_{\mu_n}^b u_n(t), ~~ t\in[0,\infty), \\
u_n(0)&=\pi_n f.
\end{split}
\end{align*}

Finally,  combining these results and Theorem 
\ref{strong_semigroup_con_theorem} yields
\begin{align*}
\lim_{n\to\infty} \left\lVert \pi_n u(t)- u_n(t)\right\rVert_{\infty}=0,
\end{align*}
uniformly on bounded time intervals. 
 
We obtain a meaningful interpretation for the diffusion of heat in the case of a mass distribution with gaps in that the heat in a rod with mass 
distribution $\mu$ diffuses approximately like the heat on a rod with mass distribution $\mu_n$ for sufficiently large $n$.  
%This result can be applied to a mass distribution $\mu$ with $\supp(\mu)\neq [0,1]$, which we can approximate by a sequence of measures $\mu_n$ having full support for all $n\in\mathbb{N}$, for example  
%$\mu_{\varepsilon} \coloneqq \frac{\mu+\varepsilon\lambda^1}{1+\varepsilon},~ n\in\mathbb{N}$. 
% Another application is given by the case of $\mu$ being a self-similar measure on the Cantor set, which implies that $\mu$ has neither full support nor a Lebesgue density. In this case, $\mu$ can be approximated by $\frac{\mu_n+\varepsilon\lambda^1}{1+\varepsilon}$, where $\mu_n$ is an approximating Cantor measure of level $n$ (compare \cite{FME}), for $n$ sufficiently large and $\varepsilon$ sufficiently small.
\\
%Moreover, there are many other examples, as homogeneous Cantor measures (see \cite{AE}) or V-variable Cantor measures (see e.g. \cite{MS}). \\

This paper is structured as follows. In the following section, we recall definitions related to Krein-Feller operators. In Section \ref{Generalized Hyperbolic Functions and the Resolvent Operator}, we introduce the concept of generalized hyperbolic functions and the connection to resolvent operators. Section \ref{The Restricted Semigroup} is devoted to the restriction of the Krein-Feller operator semigroup to the spaces $(C[0,1])_{\mu}^b$ for $b\in\{N,D\}$. After these preparations, in Section \ref{Convergence results} we develop the central convergence results, namely the convergence of the hyperbolic functions and the strong resolvent convergence in Section \ref{Strong Resolvent Convergence}, the graph norm convergence of the considered operators in Section \ref{Graph Norm Convergence} and finally, the strong semigroup convergence and convergence of solutions to heat equations in Section \ref{Strong Semigroup Convergence}. In Section \ref{Application} we show how to apply the results in three examples. Lastly, in Section \ref{Directions for Further Research}, we discuss some open problems.

\section{Preliminaries} \label{Preliminaries}
 
First, we recall the definition and some analytical properties of the operator $\Delta_{\mu}^b$, where $b\in\{N,D\}$ and $\mu$ is a non-atomic Borel probability measure on $[0,1]$ such that $0,1\in\supp(\mu)$.
 If $[0,1]\setminus \supp(\mu)\neq\emptyset$, then $[0,1]\setminus \supp(\mu)$ is open in $\mathbb{R}$ and can be written as
\begin{align}
[0,1]\setminus \supp(\mu) = \bigcup_{i\geq 1} (a_i,b_i)\label{offene_mengen} 
\end{align}
with $0<a_i<b_i<1$, $a_i,b_i\in\supp(\mu)$ for $i\geq 1$. We define 
\begin{align*}
\mathcal{D}^1\coloneqq\left\lbrace f:[0,1]\to\mathbb{R}: \text{there exists } f^{\prime}\in L^2\left([0,1],\lambda^1\right): f(x)=f(0)+\int_0^x f^{\prime}(y)dy,~ x\in[0,1]  \right\rbrace
\end{align*}
and $H^1\left([0,1],\lambda^1\right)$ to be the space of all $L^2([0,1],\mu)$-equivalence classes possessing a $\mathcal{D}^{1}-$repre-sentative. If $\mu=\lambda^1$ on $[0,1]$, this definition is equivalent to the definition of the Sobolev space $W_2^1$.

%%\begin{align*}
%%\mathcal{F}\coloneqq\left\lbrace[f]\in L^2(\mu):\text{ there exists } f\in[f] \text{ such that } f\in\mathcal{D}_{\mu}^1\right\rbrace,
%%\end{align*}
%and  
We observe that $H^1\left([0,1],\lambda^1\right)$ is the domain of the non-negative symmetric bilinear form $\mE$ on $L^2([0,1],\mu)$ defined by
\begin{align*}
\mathcal{E}(u,v)=\int_0^1 u'(x)v'(x)dx,~~~ u,v\in \mathcal{F}\coloneqq H^1\left([0,1],\lambda^1\right).
\end{align*}
%Hereby, for each argument, which is an element of $L^2([0,1],\mu)$, we choose the $\mathcal{D}^1_{\lambda}$-representative which is linear on $[0,1]\setminus F$. In Lemma \cite[Lemma A.1]{ES} it has been shown that this is possible. 
It is known (see \cite[Theorem 4.1]{FD}) that $\left(\mE,\mathcal{F}\right)$ defines a Dirichlet form on $L^2([0,1],\mu)$. Hence, there exists an associated non-negative, self-adjoint operator $\Delta_{\mu}^N$ on $L^2([0,1],\mu)$ with $\mathcal{F}=\mathcal{D}\left(\left(-\Delta_{\mu}^N\right)^{\frac{1}{2}}\right)$ such that
\begin{align*}
\langle -\Delta_{\mu}^N u,v\rangle_{\mu}&=\mE(u,v), ~~u\in\mathcal{D}\left(\Delta_{\mu}^N\right),v\in \mathcal{F}
\end{align*}
and 
\[\mathcal{D}\left(\Delta_{\mu}^N\right)=\left\{f\in L^2([0,1],\mu): f \text{ has a representative } \bar f \text{ with } \bar f\in \mathcal{D}_{\mu}^2 \text{ and } \bar f'(0)=\bar f'(1)=0\right\}.\] 
The operator $\Delta_{\mu}^N$ is called the Neumann Krein-Feller operator with respect to $\mu$. Furthermore, let $\mathcal{F}_0$ be the space of all $L^2([0,1],\mu)$-equivalence classes having a $\mathcal{D}^1-$representative $f$ such that $f(0)=f(1)=0.$
The bilinear form defined by
\begin{align*}
\mathcal{E}(u,v)=\int_0^1 u'(x)v'(x)dx,~~~ u,v\in \mathcal{F}_0,
\end{align*}
is a Dirichlet form, too (see \cite[Theorem 4.1]{FD}).
Again, there exists an associated non-negative, self-adjoint operator $\Delta_{\mu}^D$ on $L^2([0,1],\mu)$ with $\mathcal{F}_0=\mathcal{D}\left(\left(-\Delta_{\mu}^D\right)^{\frac{1}{2}}\right)$ such that
\begin{align*}
\langle -\Delta_{\mu}^D u,v\rangle_{\mu}&=\mE(u,v), ~~ u\in \left(\Delta_{\mu}^D\right),~v\in \mathcal{F}_0
\end{align*}
and
\[\mathcal{D}\left(\Delta_{\mu}^D\right)=\left\{f\in L^2([0,1],\mu): f \text{ has a representative } \bar f \text{ with } \bar f\in \mathcal{D}_{\mu}^2 \text{ and } \bar f(0)=\bar f(1)=0\right\}.\] Then $\Delta_{\mu}^D$ is called the Dirichlet Krein-Feller operator with respect to $\mu$.\\

Furthermore, it is known from \cite[Proposition 6.3, Lemma 6.7, Corollary 6.9]{FA} that there exists an $L_2([0,1],\mu)$-orthonormal basis $\lbrace \varphi_k^b: k\in\mathbb{N}\rbrace$  consisting of eigenfunctions of $-\Delta_{\mu}^b$ and that for the related ascending ordered eigenvalues $\{\lambda^{b}_i:i\in\mathbb{N}\}$ we have $0\leq\lambda_1^b\leq\lambda_2^b\leq...,$ where $\lambda_1^D>0$.

\section{Generalized Hyperbolic Functions and the Resolvent Operator}\label{Generalized Hyperbolic Functions and the Resolvent Operator}
%\subsection{Hyperbolic Functions and the Resolvent Operator}

Let $b\in\{N,D\}$ and let $\mu$ be defined as before. In this section we develop a useful representation for the resolvent density of $\Delta_{\mu}^b$. \par 
Let $\lambda>0$. We consider the initial value problem
\begin{align}
\begin{cases}	
\Delta_{\mu} g = \lambda g, \\
g(0)=1, ~~g^{\prime}(0)=0 
\end{cases}\label{ivpN1}
\end{align}
on $L^2([0,1],\mu)$. The problem \eqref{ivpN1} possesses a unique solution (see \cite[Lemma 5.1]{FA}), which we denote by $g_{1,N}^{\lambda}$. Further, under the initial conditions 
\begin{align}
g(1)=1, ~~g^{\prime}(1)=0, \label{ivpN2} \\ 
g(0)=0, ~~g^{\prime}(0)=1,\label{ivpD1}
\end{align}
and 
\begin{align}
g(1)=0, ~~g^{\prime}(1)=1 ,  \label{ivpD2}
\end{align}
respectively, the above eigenvalue problem also possesses a unique solution (see\cite[Remark 5.2]{FA}), and we denote it by $g_{2,N}^{\lambda}$, $g_{1,D}^{\lambda}$ and $g_{2,D}^{\lambda}$, respectively. The resolvent density is then given as follows.
\begin{lem}\cite[Theorem 6.1]{FA}
Let $\lambda>0$. The resolvent operator $R_{\lambda}^b\coloneqq (\lambda-\Delta_{\mu}^b)^{-1}$ is well-defined and for all $f\in L^2([0,1],\mu)$ we have
\begin{align*}
R_{\lambda}^{b}f(x)=\int_0^1\rho_{\lambda}^b(x,y)f(y)d\mu(y), ~~~ x\in[0,1],
\end{align*}
where the resolvent densities are given by
\begin{align*}
\rho_{\lambda}^N(x,y) = \rho_{\lambda}^N(y,x) &\coloneqq \frac{g_{1,N}^{\lambda}(x)g_{2,N}^{\lambda}(y)}{\left(g_{1,N}^{\lambda}\right)^{\prime}(1)} , ~~ x,y\in[0,1],~ x\leq y, \\
\rho_{\lambda}^D(x,y) = \rho_{\lambda}^D(y,x) &\coloneqq  -\frac{g_{1,D}^{\lambda}(x)g_{2,D}^{\lambda}(y)}{g_{1,D}^{\lambda}(1)} , ~~ x,y\in[0,1],~ x\leq y.
\end{align*}
\end{lem}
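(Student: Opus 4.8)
The plan is to realize $R_\lambda^b$ directly as a Green operator built from the four fundamental solutions and then to identify it with $(\lambda-\Delta_\mu^b)^{-1}$, using the fact recalled in the Preliminaries that $-\Delta_\mu^b$ is non-negative and self-adjoint. The first ingredient is a \emph{generalized Wronskian identity}: if $v,w$ both solve $\Delta_\mu g=\lambda g$, then $v,w\in\mathcal{D}_{\mu}^2$ and their ordinary derivatives have $\mu$-derivatives $\lambda v,\lambda w$, so a product-rule computation gives $d(vw'-v'w)=\big(v(w')^\mu-w(v')^\mu\big)\,d\mu=\lambda(vw-wv)\,d\mu=0$, i.e.\ $vw'-v'w$ is constant on $[0,1]$ (the Lebesgue parts $v'w'\,dx$ cancel). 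Evaluating this constant at the endpoints via \eqref{ivpN1}--\eqref{ivpD2} identifies it, up to sign, with $(g_{1,N}^{\lambda})'(1)$ in the Neumann case and with $g_{1,D}^{\lambda}(1)$ in the Dirichlet case. I would then note it cannot vanish: otherwise the two relevant fundamental solutions would be proportional, so a single nonzero function would solve $\Delta_\mu g=\lambda g$ under both boundary conditions, i.e.\ be an eigenfunction of $-\Delta_\mu^b$ with eigenvalue $-\lambda<0$, contradicting $-\Delta_\mu^b\ge 0$. Hence the kernels $\rho_\lambda^b$ are well defined, and symmetry $\rho_\lambda^b(x,y)=\rho_\lambda^b(y,x)$ holds by inspection.

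Next, for $f\in L^2([0,1],\mu)$ set $u(x):=\int_0^1\rho_\lambda^b(x,y)f(y)\,d\mu(y)$. Writing $g_1=g_{1,b}^{\lambda}$, $g_2=g_{2,b}^{\lambda}$, $A(x)=\int_0^x g_1f\,d\mu$ and $B(x)=\int_x^1 g_2f\,d\mu$, and splitting the integral at $y=x$ using the symmetry of $\rho_\lambda^b$, one obtains $u=c^{-1}(g_2A+g_1B)$ with $c=-\big(g_1g_2'-g_1'g_2\big)$ (the constant from the Wronskian identity; its sign is what will produce the correct right-hand side $-f$ below). The crucial point is that, although $A$ and $B$ are merely of bounded variation when $\mu$ is singular, $g_2A+g_1B$ lies in $C^1$ with derivative $g_2'A+g_1'B$: in the difference quotient the troublesome increments combine into $\int_x^{x+h}\big(g_2(x)g_1(y)-g_1(x)g_2(y)\big)f(y)\,d\mu(y)$, which is $o(h)$ since $g_1,g_2$ are Lipschitz, making the bracket $O(|y-x|)$, and $\mu$ is non-atomic. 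Given $u\in C^1$ with $u'=c^{-1}(g_2'A+g_1'B)$, a second product-rule computation — using $(g_1')^\mu=\lambda g_1$, $(g_2')^\mu=\lambda g_2$, $dA=g_1f\,d\mu$, $dB=-g_2f\,d\mu$ — shows $u'$ has $\mu$-derivative $c^{-1}\big(\lambda(g_2A+g_1B)+(g_1g_2'-g_1'g_2)f\big)=\lambda u-f$. Thus $u$ has a $\mathcal{D}_{\mu}^2$-representative with $\Delta_\mu u=\lambda u-f$, i.e.\ $(\lambda-\Delta_\mu)u=f$. The boundary conditions follow from $A(0)=B(1)=0$ together with $g_1'(0)=g_2'(1)=0$ for $b=N$, respectively $g_1(0)=g_2(1)=0$ for $b=D$; in either case $u$ satisfies the prescribed homogeneous condition, so $u\in\mathcal{D}(\Delta_\mu^b)$.

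Finally, since $\rho_\lambda^b$ is continuous on $[0,1]^2$ and $\mu$ is a probability measure, $f\mapsto u$ is a bounded operator on $L^2([0,1],\mu)$; and since $-\Delta_\mu^b$ is non-negative self-adjoint, $\lambda>0$ lies in its resolvent set, so $\lambda-\Delta_\mu^b:\mathcal{D}(\Delta_\mu^b)\to L^2([0,1],\mu)$ is a bijection. Combined with the relation $(\lambda-\Delta_\mu^b)u=f$ just established, this gives $u=(\lambda-\Delta_\mu^b)^{-1}f=R_\lambda^b f$, which is the claim. I expect essentially all the work to be in the two $\mu$-calculus steps of the second paragraph — the constancy of the generalized Wronskian (routine once the product rule is in hand) and, more delicately, the $C^1$-regularity of $g_2A+g_1B$ with the stated derivative despite $A,B$ not being Lebesgue-differentiable for singular $\mu$ — while the rest is bookkeeping with the four sets of initial conditions.
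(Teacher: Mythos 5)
The paper does not prove this lemma at all: it is imported verbatim from \cite[Theorem 6.1]{FA}, so there is no internal argument to compare against. Your proposal is a correct, self-contained proof, and it follows the same classical Green-function construction used in the cited reference: constancy and non-vanishing of the generalized Wronskian (via non-negativity of $-\Delta_{\mu}^b$), the $C^1$-regularity of $g_2A+g_1B$ despite $A,B$ being only of bounded variation for singular $\mu$, the product rule for continuous BV functions to get $(u')^{\mu}=\lambda u-f$, and identification with $(\lambda-\Delta_{\mu}^b)^{-1}$ through injectivity. The signs of the Wronskian constant match the stated normalizations $\left(g_{1,N}^{\lambda}\right)'(1)$ and $-g_{1,D}^{\lambda}(1)$, and the boundary conditions are verified correctly, so nothing is missing.
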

%\begin{proof}
%\cite[Theorem 6.1]{FA}
%\end{proof}
It is well-known that if $\mu=\lambda^1$, the solutions to \eqref{ivpN1} and  \eqref{ivpD1} are given by \[g_{1.N}^{\lambda}(x)=\cosh\left(\sqrt{\lambda}x\right) ~\text{ and }~  g_{1.D}^{\lambda}(x)=\frac{1}{\sqrt{\lambda}}\sinh\left(\sqrt{\lambda}x\right), ~x\in[0,1],\]
respectively.
We generalize the notion of hyperbolic functions by solving \eqref{ivpN1} and  \eqref{ivpD1} for an arbitrary measure $\mu$ according to the given conditions. 
To this end, we introduce generalized monomials as in \cite{AM}.

\begin{defi}\label{monomial_def}
For $x\in[0,1]$ we set $p_0(x)=q_0(x)=1$ and for $k\in\mathbb{N}$
\begin{align*}
p_k(x)&\coloneqq\begin{cases}
\int_0^xp_{k-1}(t)d\mu(t),& \text{if } k \text{ is odd,}\\
\int_0^xp_{k-1}(t)dt,& \text{if } k \text{ is even,}\
\end{cases}\\
q_k(x)&\coloneqq\begin{cases}
\int_0^xq_{k-1}(t)dt,& \text{if } k \text{ is odd,}\\
\int_0^xq_{k-1}(t)d\mu(t),& \text{if } k \text{ is even.}\
\end{cases}
\end{align*}
\end{defi}

We note that for $x\in[0,1]$ and $k\geq 0$,
\begin{align}\label{trig_estimate1}
p_{2k+1}(x)\leq  p_{2k}(x)\leq \frac{x^k}{k!},~~ q_{2k+1}(x)\leq p_{2k}(x)\leq\frac{x^k}{k!}
\end{align}
(see \cite[Lemma 2.3]{FLZ}).

\begin{defi}\label{trigonometric_def}
\label{trigonometric_def_iii} We define for $x\in[0,1]$, $z\in\mathbb{R}$
\begin{align*}
\sinh_{z}(x)\coloneqq \sum_{k=0}^{\infty}z^{2k+1}q_{2k+1}(x), ~~
\cosh_z(x)\coloneqq \sum_{k=0}^{\infty}z^{2k}p_{2k}(x).
\end{align*}
\end{defi}
By \eqref{trig_estimate1} for all $z\in\mathbb{R}$
\begin{align}
\left\lVert \sinh_z\right\rVert_{\infty}\leq ze^{z^2}, ~ 
\left\lVert \cosh_z\right\rVert_{\infty}\leq e^{z^2}.\label{cosh_sup_estimate}
\end{align}
\begin{exa} If $\mu=\lambda^1$, we have
$q_k(x)=\frac{x^k}{k!}, ~ k\geq 0$.
It follows that in this case
\begin{align*}
\sinh_z(x)=\sum_{k=0}^{\infty}z^{2k+1}\frac{x^{2k+1}}{(2k+1)!}=\sinh(zx)
\end{align*}
and analogously $\cosh_z(x)=\cosh(zx)$.
\end{exa}

\begin{satz}
Let $\lambda>0$. Then, for $x\in[0,1]$, we have
\begin{align*}
\begin{aligned}
g_{1,N}^{\lambda}(x) = &\cosh_{\sqrt{\lambda}}(x), ~~~
&g_{1,D}^{\lambda}(x) = &\frac{1}{\sqrt{\lambda}}\sinh_{\sqrt{\lambda}}(x),\\
g_{2,N}^{\lambda}(x) = &\cosh_{\sqrt{\lambda}}(1-x), ~~~
&g_{2,D}^{\lambda}(x) = &-\frac{1}{\sqrt{\lambda}}\sinh_{\sqrt{\lambda}}(1-x).
\end{aligned}
\end{align*}
\end{satz}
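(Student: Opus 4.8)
The plan is to check that each of the four functions on the right-hand side solves the corresponding initial value problem among \eqref{ivpN1}--\eqref{ivpD2}; since each of these has a unique solution by \cite[Lemma 5.1]{FA} and \cite[Remark 5.2]{FA}, this identifies them with $g_{1,N}^{\lambda}$, $g_{1,D}^{\lambda}$, $g_{2,N}^{\lambda}$, $g_{2,D}^{\lambda}$. I would treat the pair anchored at $0$ in detail and deduce the pair anchored at $1$ by reflection.

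First I would record the differentiation rules for the generalized monomials that are immediate from Definition \ref{monomial_def}: for even $k$ one has $p_k' = p_{k-1}$ in the ordinary sense, while for odd $k$ the function $p_k$ is the $\mu$-primitive $p_k(x)=\int_0^x p_{k-1}\,d\mu$ of $p_{k-1}$; dually, for odd $k$ one has $q_k'=q_{k-1}$ and for even $k$ the function $q_k$ is the $\mu$-primitive of $q_{k-1}$. Moreover $p_k(0)=q_k(0)=0$ for $k\ge 1$ and $p_0=q_0\equiv 1$. The second ingredient is that the four series $\sum_k z^{2k}p_{2k}$, $\sum_k z^{2k+1}p_{2k+1}$, $\sum_k z^{2k+1}q_{2k+1}$ and $\sum_k z^{2k+1}q_{2k}$ converge absolutely and uniformly on $[0,1]$; this follows from \eqref{trig_estimate1} together with the analogous bound $q_{2k}(x)\le x^{k-1}/(k-1)!$ for $k\ge 1$, proved exactly as \cite[Lemma 2.3]{FLZ}. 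Uniform convergence, together with the elementary fact that a uniform limit of ordinary (resp. $\mu$-) primitives whose integrands also converge uniformly is again a primitive of the limit integrand, justifies differentiating the series of Definition \ref{trigonometric_def} term by term; in particular $\cosh_z,\sinh_z\in C^1([0,1])$.

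With these in hand the verification is short. Term-by-term differentiation gives $\cosh_z' = z\sum_{k\ge 0}z^{2k+1}p_{2k+1}$, and applying the $\mu$-derivative term by term replaces each $p_{2k+1}$ by $p_{2k}$, so $\cosh_z'$ is the $\mu$-primitive of $z^2\cosh_z$; hence $\cosh_z\in\mathcal{D}_{\mu}^2$ with $\Delta_{\mu}\cosh_z=z^2\cosh_z$, while $\cosh_z(0)=p_0(0)=1$ and $\cosh_z'(0)=0$. Taking $z=\sqrt{\lambda}$ this is exactly \eqref{ivpN1}, so $g_{1,N}^{\lambda}=\cosh_{\sqrt{\lambda}}$. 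Likewise $\sinh_z'=\sum_{k\ge 0}z^{2k+1}q_{2k}$, whose $\mu$-derivative is $z^2\sinh_z$, while $\sinh_z(0)=0$ and $\sinh_z'(0)=zq_0(0)=z$; hence $\tfrac{1}{\sqrt{\lambda}}\sinh_{\sqrt{\lambda}}$ solves \eqref{ivpD1}, so $g_{1,D}^{\lambda}=\tfrac{1}{\sqrt{\lambda}}\sinh_{\sqrt{\lambda}}$.

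For the functions anchored at $1$ I would run the same computation with the monomials built up from $1$ instead of $0$; equivalently, apply the two identities just proved to the reflected measure $\bar\mu$, defined by $\bar\mu(A)\coloneqq\mu(\{1-x:x\in A\})$, using that $g\mapsto g(1-\,\cdot\,)$ sends a solution of $\Delta_{\bar\mu}\tilde g=\lambda\tilde g$ with data prescribed at $0$ to a solution of $\Delta_{\mu}g=\lambda g$ with the reflected data at $1$ (the ordinary first derivative picks up a sign under $x\mapsto 1-x$, the second, i.e.\ the $\mu$-derivative, does not). Then $x\mapsto\cosh_{\sqrt{\lambda}}(1-x)$ satisfies $\Delta_{\mu}g=\lambda g$ with $g(1)=\cosh_{\sqrt{\lambda}}(0)=1$ and $g'(1)=0$, i.e.\ \eqref{ivpN2}, and $x\mapsto-\tfrac{1}{\sqrt{\lambda}}\sinh_{\sqrt{\lambda}}(1-x)$ satisfies it with $g(1)=0$ and $g'(1)=\tfrac{1}{\sqrt{\lambda}}\sinh_{\sqrt{\lambda}}'(0)=1$, i.e.\ \eqref{ivpD2}; uniqueness then finishes the proof. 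The routine-but-essential step is the term-by-term differentiation, where one must verify that the limit genuinely lies in $\mathcal{D}_{\mu}^2$ rather than merely that the formal derivative series converges; the one genuine piece of bookkeeping is the reflection, since $p_k,q_k$ are anchored at $0$, so the generalized hyperbolic functions appearing in the last two identities are those associated with $\bar\mu$.
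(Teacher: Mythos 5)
Your proof is correct, and for the two functions anchored at $0$ it is close in spirit to the paper's: the paper likewise reduces everything to uniqueness of the initial value problems, but it cites \cite[Lemma 2.3]{FLZ} for $g_{1,D}^{\lambda}$ and declares $g_{1,N}^{\lambda}$ analogous, so your explicit term-by-term differentiation (including the extra bound on $q_{2k}$ needed to make the series for $\sinh_z'$ converge uniformly, which \eqref{trig_estimate1} does not cover) is a self-contained version of what the paper outsources. For the functions anchored at $1$ the routes genuinely differ: the paper does not differentiate but manipulates the series $\sum_n\lambda^n p_{2n}(1-x)$ under the integral signs, interchanging sum and integral by dominated convergence to arrive at the integral equation $f(x)=1-\lambda\int_x^1\int_0^y f\,d\mu\,dy$ and reading off $\Delta_{\mu}f=\lambda f$ together with the data at $x=1$, whereas you transport the identities already proved at $0$ by the reflection $x\mapsto 1-x$.

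The point you downplay as ``bookkeeping'' is in fact the substantive issue, and you resolve it correctly where the paper does not. The paper's change of variables $\int_0^y p_{2n-2}(t)\,d\mu(t)=\int_{1-y}^1 p_{2n-2}(1-t)\,d\mu(t)$ silently replaces the image measure $\bar\mu(A)=\mu(\{1-x:x\in A\})$ by $\mu$, which is legitimate only when $\mu$ is symmetric about $1/2$; consistently with this, the integral equation it derives forces $f'(0)=0$, which is incompatible with $f'(1)=0$ for a positive eigenfunction. For non-symmetric $\mu$ the functions $x\mapsto\cosh_{\sqrt{\lambda}}(1-x)$ and $x\mapsto-\lambda^{-1/2}\sinh_{\sqrt{\lambda}}(1-x)$ built from the $\mu$-monomials do not solve \eqref{ivpN2} and \eqref{ivpD2}: for instance, for $d\mu=2x\,dx$ one has $p_2(1)=\int_0^1F=1/3$, while the first-order coefficient in $\lambda$ of $g_{2,N}^{\lambda}(0)$ is $\int_0^1(1-F)=2/3$. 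The correct statement is the one you actually prove, with the hyperbolic functions in the reflected arguments taken with respect to $\bar\mu$ (equivalently, with monomials anchored at $1$); this correction then propagates to the resolvent-density formulas that follow. So your argument is sound and, on this point, more careful than the paper's.
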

\begin{proof}
%The assertion for $g_{1,D}^{\lambda}$ was proven in \cite[Lemma 2.3]{FLZ}. The proof for $g_{1,N}^{\lambda}$ works analogously. We verify the assertion for $g_{2,N}$. Preliminary, we note that for $n\geq 1$, $x\in[0,1]$
%\begin{align*}
%&\int_0^x \int_0^y p_{2n-2}(1-t)d\mu(t)dy
%= -\int_0^x \int_0^{1-y} p_{2n-2}(t)d\mu(t)dy
%= \int_0^{1-x} \int_0^{y} p_{2n-2}(t)d\mu(t)dy
%\end{align*}
%With that and the dominated convergence theorem along with Lemma \ref{tilde_p_estimate},
%\begin{align*}
%\ch_{\sqrt{\lambda}}(1-x) &= \sum_{n=0}^{\infty}\lambda^n p_{2n}(1-x) \\
%&=1+\sum_{n=1}^{\infty} \lambda^n \int_0^{1-x}\int_0^y  p_{2n-2}(t)d\mu(t)dy \\
%&=1+\sum_{n=1}^{\infty} \lambda^n \int_0^x\int_0^y  p_{2n-2}(1-t)d\mu(t)dy \\
%&=1+\sum_{n=0}^{\infty} \lambda^{n+1} \int_0^x\int_0^y  p_{2n}(1-t)d\mu(t)dy \\
%&=1+ \lambda \int_0^x\int_0^y  \sum_{n=0}^{\infty}\lambda^n p_{2n}(1-t)d\mu(t)dy \\
%&=1+ \lambda \int_0^x\int_0^y  \ch_{\sqrt{\lambda}}(1-t)d\mu(t)dy. \\
%\end{align*}

The assertion for $g_{1,D}^{\lambda}$ was proven in \cite[Lemma 2.3]{FLZ}. The proof for $g_{1,N}^{\lambda}$ works analogously. We verify the assertion for $g_{2,N}^{\lambda}$. Let $x\in[0,1]$. Then,
\begin{align*}
\cosh_{\sqrt{\lambda}}(1-x) &= \sum_{n=0}^{\infty}\lambda^n p_{2n}(1-x) \\
&=1+\sum_{n=1}^{\infty} \lambda^n \int_0^{1-x}\int_0^y  p_{2n-2}(t)d\mu(t)dy \\
&=1+\sum_{n=1}^{\infty} \lambda^n \int_0^{1-x}\int_{1-y}^{1}  p_{2n-2}(1-t)d\mu(t)dy \\
&=1-\sum_{n=1}^{\infty} \lambda^n \int_x^1\int_0^y  p_{2n-2}(1-t)d\mu(t)dy \\
&=1-\sum_{n=0}^{\infty} \lambda^{n+1} \int_x^1\int_0^y  p_{2n}(1-t)d\mu(t)dy .
\end{align*}
Due to estimate \eqref{trig_estimate1} we can use the dominated convergence theorem and obtain
\begin{align*}
\cosh_{\sqrt{\lambda}}(1-x) 
&=1- \lambda \int_x^1\int_0^y  \sum_{n=0}^{\infty}\lambda^n p_{2n}(1-t)d\mu(t)dy \\
&=1- \lambda \int_x^1\int_0^y  \cosh_{\sqrt{\lambda}}(1-t)d\mu(t)dy. 
\end{align*}
We set $f(x)\coloneqq \cosh_{\sqrt{\lambda}}(1-x),~ x\in[0,1]$ and get
\begin{align*}
f(x)=1- \lambda \int_x^1\int_0^y  f(t)d\mu(t)dy, ~ x\in[0,1]
\end{align*}
and in particular
\begin{align*}
f(0)=1- \lambda \int_0^1\int_0^y  f(t)d\mu(t)dy.
\end{align*}
It follows that, for $x\in[0,1]$,
\begin{align*}
f(x)-f(0)= \lambda \int_0^x\int_0^y  f(t)d\mu(t)dy.
\end{align*}
The latter equation can be written as $\Delta_{\mu}f=\lambda f.$ It remains to verify the initial conditions. Obviously, $f(1)=\cosh_{\sqrt{\lambda}}(0)=1$. 
Using \eqref{trig_estimate1} again, we have
\begin{align*}
f^{\prime}(1)=-\sum_{n=1}^{\infty}\lambda^n p_{2n-1}(0)=0.
\end{align*} 
The proof for $g_{2,D}^{\lambda}$ follows using the same ideas.
\end{proof}

This leads to the following representation for the resolvent density:
\begin{kor}
Let $\lambda>0$. It holds for $x,y\in[0,1]$, $x\leq y$,
\begin{align*}
\rho_{\lambda}^N(x,y)=\rho_{\lambda}^N(y,x)&= \left(\cosh^{\prime}_{\sqrt{\lambda}}(1)\right)^{-1}\cosh_{\sqrt{\lambda}}(x)\cosh_{\sqrt{\lambda}}(1-y),\\
\rho_{\lambda}^D(x,y)= \rho_{\lambda}^D(y,x)&= \frac{1}{\sqrt{\lambda}} \left(\sinh_{\sqrt{\lambda}}(1)\right)^{-1}\sinh_{\sqrt{\lambda}}(x)\sinh_{\sqrt{\lambda}}(1-y).
\end{align*}
\end{kor}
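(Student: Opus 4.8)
The plan is simply to substitute the explicit solutions of the four initial value problems, obtained in the preceding Proposition, into the resolvent density formulas of the Lemma taken from \cite[Theorem 6.1]{FA}, and then to simplify the resulting prefactors. No new analytic input is needed; everything reduces to bookkeeping of constants.

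For the Neumann case I would start from $\rho_{\lambda}^N(x,y)=g_{1,N}^{\lambda}(x)g_{2,N}^{\lambda}(y)/\left(g_{1,N}^{\lambda}\right)^{\prime}(1)$ valid for $x\le y$, insert $g_{1,N}^{\lambda}=\cosh_{\sqrt{\lambda}}$ and $g_{2,N}^{\lambda}(\cdot)=\cosh_{\sqrt{\lambda}}(1-\cdot)$, and note that $\left(g_{1,N}^{\lambda}\right)^{\prime}(1)=\cosh_{\sqrt{\lambda}}^{\prime}(1)$. This gives the asserted formula at once, and the symmetry statement $\rho_{\lambda}^N(x,y)=\rho_{\lambda}^N(y,x)$ is inherited verbatim from the Lemma.

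For the Dirichlet case I would likewise start from $\rho_{\lambda}^D(x,y)=-g_{1,D}^{\lambda}(x)g_{2,D}^{\lambda}(y)/g_{1,D}^{\lambda}(1)$ for $x\le y$ and substitute $g_{1,D}^{\lambda}=\tfrac{1}{\sqrt{\lambda}}\sinh_{\sqrt{\lambda}}$, $g_{2,D}^{\lambda}(\cdot)=-\tfrac{1}{\sqrt{\lambda}}\sinh_{\sqrt{\lambda}}(1-\cdot)$, and $g_{1,D}^{\lambda}(1)=\tfrac{1}{\sqrt{\lambda}}\sinh_{\sqrt{\lambda}}(1)$. The two explicit numerator factors contribute $\tfrac{1}{\lambda}$, the denominator contributes $\tfrac{1}{\sqrt{\lambda}}$, and the minus sign carried by $g_{2,D}^{\lambda}$ cancels the leading minus sign, so that altogether the prefactor collapses to $\tfrac{1}{\sqrt{\lambda}}\left(\sinh_{\sqrt{\lambda}}(1)\right)^{-1}$, yielding the claimed expression; symmetry is again inherited from the Lemma.

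There is essentially no obstacle. The only point worth recording explicitly is that the normalizing constants $\cosh_{\sqrt{\lambda}}^{\prime}(1)=\left(g_{1,N}^{\lambda}\right)^{\prime}(1)$ and $\sinh_{\sqrt{\lambda}}(1)=\sqrt{\lambda}\,g_{1,D}^{\lambda}(1)$ are nonzero, so that the densities are well defined; this is already contained in the well-definedness assertion of the cited Lemma and hence may be invoked without further argument.
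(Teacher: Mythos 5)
Your proposal is correct and is exactly the argument the paper intends: the corollary is stated as an immediate consequence of substituting the explicit formulas for $g_{1,N}^{\lambda}, g_{2,N}^{\lambda}, g_{1,D}^{\lambda}, g_{2,D}^{\lambda}$ from the preceding Proposition into the resolvent density formulas of the cited Lemma, and your bookkeeping of the $\sqrt{\lambda}$ factors and signs in the Dirichlet case is accurate.
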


\section{The Restricted Semigroup}\label{The Restricted Semigroup}

Let $b\in\{N,D\}$ and let $\mu$ be defined as before. It is well-known that $\Delta_{\mu}^b$ is the generator of a strongly continuous Markovian  semigroup $\left( T_t^b\right)_{t\geq 0}$ of contractions on $L^2([0,1],\mu)$. 
\begin{defi}
For $(t,x,y)\in(0,\infty)\times[0,1]\times[0,1]$, we define
\begin{align*}
p_t^b(x,y)\coloneqq \sum_{k=1}^{\infty} e^{-\lambda_k^b t}\varphi_k^b(x)\varphi_k^b(y).
\end{align*}
This is called the heat kernel of $\Delta_{\mu}^b$. 
\end{defi}
The heat kernel is the integral kernel of the semigroup $\left( T_t^b\right)_{t\geq 0}$. That is,
for $t>0$ and $f\in L^2([0,1],\mu)$, we can write
\begin{align*}
T_t^bf(x)=\int_0^1 p^b_t(x,y)f(y)d\mu(y), ~~ x\in[0,1].
\end{align*}
In this section, we restrict these semigroups to appropriate spaces of equivalence classes of continuous functions. 
\begin{defi}
\begin{enumerate}[label=(\roman*)] 
\item  We  define $(C[0,1])_{\mu}^N$ as the set of all $L^2([0,1],\mu)$-equivalence classes possessing a continuous representative, formally 
%\begin{align*}
%(C[0,1])_{\mu}^N\coloneqq C[0,1]\subset L^2([0,1],\mu).
%\end{align*}
\begin{align*}
(C[0,1])_{\mu}^N\coloneqq \left\{ f\in L^2([0,1],\mu): f \textnormal{ possesses a continuous representative}\right\}.
\end{align*}
\item We  further define $(C[0,1])_{\mu}^D$ as the set of all $L^2([0,1],\mu)$-equivalence classes possessing a continuous representative that satisfies Dirichlet boundary conditions, formally
\begin{align*}
(C[0,1])_{\mu}^D\coloneqq \big\{ f\in L^2([0,1],\mu): f \textnormal{ possesses a continuous representative } \bar f\\ \textnormal{ such that } \bar f(0)=\bar f(1)=0 \big\}.
\end{align*}
\end{enumerate}

\end{defi}
The space $(C[0,1])_{\mu}^b$ is a Banach space with the norm $\left\lVert f\right\rVert_{(C[0,1])_{\mu}^b}\coloneqq \left\lVert \left.f\right|_{\supp(\mu)} \right\rVert_{\infty}$.  Note that \[\left\lVert f\right\rVert_{(C[0,1])_{\mu}^b}=\left\lVert \widetilde f\right\rVert_{\infty},\] where $\widetilde f$ is the continuous representative of $f$ that is linear on all intervals in $[0,1]\setminus \supp(\mu)$. To simplify the notation, we henceforth write $\left\lVert f\right\rVert_{\infty}$ for $\left\lVert f\right\rVert_{(C[0,1])_{\mu}^b}$.\par 
Let $u=\sum_{k\geq 1} u_k^b\varphi_k^b\in L^2([0,1],\mu)$ and let $t>0$. It holds 
\begin{align}\label{semigroup_range}
\Delta_{\mu}^b T_t^b u = \sum_{k\geq 1}\lambda_k^be^{-\lambda_k^b t}u_k^b\varphi_k^b\in L^2([0,1],\mu)
\end{align}
and thus $T^b_t u\in\mathcal{D}\left(\Delta_{\mu}^b\right)$.
Hence, the following inclusion holds:
 \[T_t^b\left((C[0,1])_{\mu}^b\right)\subseteq (C[0,1])_{\mu}^b. \]  This motivates the definition of the restricted semigroup $
\left(\bar T_t^b\right)_{t\geq 0}\coloneqq \left(\left(T_t^b\right)_{|_{(C[0,1])_{\mu}^b}}\right)_{t\geq 0}
$, which is for $t\geq 0$ defined by
\begin{align*}
\bar T_t^b: (C[0,1])_{\mu}^b\to (C[0,1])_{\mu}^b, ~ \bar T_t^b f = T_t^bf.
\end{align*}
When evaluating an element of $(C[0,1])_{\mu}^b$ pointwise, we always evaluate the representative that is linear on all intervals in $[0,1]\setminus \supp(\mu)$.

The goal of this section is to show that $\left(\bar T_t^b\right)_{t\geq 0}$ again defines a strongly continuous contraction semigroup. It is obvious that the semigroup property holds. 
Note that by the Markov property of $(T_t^b)_{t\geq 0}$ 
%it follows for $f(x)\equiv 1, ~ x\in[0,1]$  
%\begin{align*}
%0\leq T_t^b f(x) = \int_0^1 p^b_t(x,y)d\mu(y) \leq 1,~ x\in[0,1] 
%\end{align*}
%and consequently 
for $g\in (C[0,1])_{\mu}^N$
\begin{align*}
\left| T_t^b g(x)\right|= \left| \int_0^1 p^b_t(x,y)g(y)d\mu(y)\right| \leq \left\lVert  g \right\rVert_{\infty} \left| \int_0^1 p^b_t(x,y)d\mu(y)\right|\leq  \left\lVert  g \right\rVert_{\infty}, ~ x\in[0,1].
\end{align*}
Hence, $(\bar T_t^b)_{t\geq 0}$ is a semigroup of contractions. It remains to prove the strong continuity. To this end, we need some preparations. 
We write $\mE(f,f)\coloneqq \mE(f)$ and $\|f\|_{\mu} = \int_0^1 f(x)^2 d\mu(x)$.

\begin{lem} \label{strong_cont_lem_1}
There exists a constant $c_1>0$ such that for all $f\in\mathcal{F}$
\begin{align*}
\left\lVert f\right\rVert_{\infty} \leq c_1\left(\mE(f)^{\frac{1}{2}}+\left\lVert f\right\rVert_{\mu}\right).
\end{align*}
\end{lem}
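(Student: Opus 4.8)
The plan is to establish the estimate with $c_1 = 1$ via the classical one-dimensional Sobolev embedding, using that $\mu$ is a probability measure to locate a base point at which the value of $f$ is controlled by $\lVert f\rVert_{\mu}$.

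First I would pass to the $\mathcal{D}^1$-representative $\bar f$ of $f$, which is absolutely continuous on $[0,1]$ and satisfies $\bar f(x)-\bar f(y)=\int_y^x \bar f'(t)\,dt$ for all $x,y\in[0,1]$. Since the interval has length $1$, Cauchy--Schwarz gives $\int_0^1 \lvert \bar f'(t)\rvert\,dt \le \mathcal{E}(f)^{1/2}$.

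The key step is to produce a point $y_0\in[0,1]$ with $\lvert \bar f(y_0)\rvert \le \lVert f\rVert_{\mu}$. This holds because the set $A\coloneqq\{y\in[0,1]:\bar f(y)^2\le \lVert f\rVert_{\mu}^2\}$ must have positive $\mu$-measure: if $\mu(A)=0$ then $\bar f(y)^2>\lVert f\rVert_{\mu}^2$ for $\mu$-a.e.\ $y$, which forces $\int_0^1 \bar f(y)^2\,d\mu(y)>\lVert f\rVert_{\mu}^2$, contradicting $\mu([0,1])=1$ and the definition of $\lVert f\rVert_{\mu}$. Picking any $y_0\in A$ (note $A\cap\supp(\mu)\neq\emptyset$) and writing $\bar f(x)=\bar f(y_0)+\int_{y_0}^x \bar f'(t)\,dt$, one obtains for every $x\in[0,1]$
\[
\lvert \bar f(x)\rvert \le \lvert \bar f(y_0)\rvert + \int_0^1 \lvert \bar f'(t)\rvert\,dt \le \lVert f\rVert_{\mu} + \mathcal{E}(f)^{1/2}.
\]
Taking the supremum over $x\in\supp(\mu)$ — equivalently, over $x\in[0,1]$ of the representative that is linear on the intervals of $[0,1]\setminus\supp(\mu)$, which does not increase the supremum — yields $\lVert f\rVert_{\infty}\le \mathcal{E}(f)^{1/2}+\lVert f\rVert_{\mu}$, i.e.\ the claim with $c_1=1$.

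I do not anticipate a real obstacle here. The only point that needs a moment's care is the selection of $y_0$: one uses continuity of $\bar f$ together with $\mu$ being a probability measure to turn the statement ``$\lVert f\rVert_{\mu}$ dominates the $\mu$-essential infimum of $\lvert f\rvert$'' into an honest pointwise inequality at a concrete point. No compactness argument or finer structural information about $\mu$ is required, and the constant may be taken equal to $1$.
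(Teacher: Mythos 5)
Your argument is correct, and it takes a genuinely different route from the paper. The paper's proof splits $f$ into $f-f_0$ and the ``harmonic'' (affine) interpolant $f_0$ of its boundary values: the H\"older-type estimate $|f(x)-f(y)|\le \mathcal{E}(f)^{1/2}|x-y|^{1/2}$ controls $\|f-f_0\|_\infty$ by $\mathcal{E}(f-f_0)^{1/2}\le\mathcal{E}(f)^{1/2}$ (after checking $\mathcal{E}(f-f_0)\le\mathcal{E}(f)$ by an orthogonality computation), while $\|f_0\|_\infty\le c_2\|f_0\|_\mu$ comes from equivalence of norms on the two-dimensional space of affine functions; this yields a non-explicit constant $c_1=1+c_2$ depending on $\mu$ only through that finite-dimensional comparison. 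You instead locate a base point $y_0$ with $|\bar f(y_0)|\le\|f\|_\mu$ by the pigeonhole/Chebyshev observation that $\{\bar f^2\le\|f\|_\mu^2\}$ must carry positive $\mu$-mass (here $\mu([0,1])=1$ is used), and then apply the fundamental theorem of calculus plus Cauchy--Schwarz on an interval of length at most $1$. This is more elementary, avoids both the harmonic decomposition and the finite-dimensionality argument, and produces the explicit and $\mu$-independent constant $c_1=1$, which is strictly better than what the paper's proof delivers. All the individual steps check out: the $\mathcal{D}^1$-representative is absolutely continuous, the set $A$ is nonempty because a nonnegative function that is $\mu$-a.e.\ strictly above its own $\mu$-mean would have integral exceeding that mean, and your pointwise bound over all of $[0,1]$ dominates the norm $\|f\|_{(C[0,1])_\mu^b}=\|f|_{\supp(\mu)}\|_\infty$ actually used in the paper. (One cosmetic remark: you read $\|f\|_\mu$ as the $L^2(\mu)$ norm, i.e.\ with the square root, which is clearly what is intended even though the paper's displayed definition omits it; the lemma would be false under the literal reading, as constant functions show.)
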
 
\begin{proof}
We follow the proof of \cite[Lemma 5.2.8]{KA}. Let $f\in\mathcal{F}.$ Then, by the Cauchy-Schwarz inequality for all $x,y\in[0,1]$ 
\begin{align*}
\left| f(x)-f(y)\right| = \left| \int_x^y f^{\prime}(z)dz\right| \leq \left(\int_x^y\left( f^{\prime}\right)^2(z)dz\right)^{\frac{1}{2}}\left| x-y\right|^{\frac{1}{2}} = \mE(f)^{\frac{1}{2}}\left| x-y\right|^{\frac{1}{2}}.
\end{align*}
Now, let $g\in\mathcal{F}_0$. Then, by setting $y=0$ in the previous calculation, we get
\begin{align*}
\left|g(x)\right| \leq \mE(g)^{\frac{1}{2}}|x|^{\frac{1}{2}},~ x\in[0,1]
\end{align*}
and consequently,
\begin{align}\label{kig_lemma_1}
\left\lVert g\right\rVert_{\infty} \leq \mE(g)^{\frac{1}{2}}.
\end{align}
Further, for a given $f\in\mathcal{F}$, let $f_0$ be the unique harmonic function that coincides with $f$ on the boundary, that is $f_0(x)\coloneqq f(0)+x\left(f(1)-f(0)\right), ~x\in[0,1]$. Then,
\begin{align*}
\mE(f-f_0)&=\mE(f)-2\mE(f,f_0)+\mE(f_0)\\
&=\mE(f)-2\int_0^1f'(x)(f(1)-f(0))dx+(f(1)-f(0))^2\\
&=\mE(f)-2(f(1)-f(0))^2+(f(1)-f(0))^2\\
&=\mE(f)-(f(1)-f(0))^2
\end{align*}
and thus
\begin{align}\label{kig_lemma_2}
\mE(f-f_0)\leq\mE(f).
\end{align}
Combining \eqref{kig_lemma_1} and \eqref{kig_lemma_2},
\begin{align*}
\left\lVert f-f_0\right\rVert_{\infty}\leq \mE(f-f_0)^{\frac{1}{2}}\leq \mE(f)^{\frac{1}{2}}.
\end{align*}
Since the space of harmonic functions on $[0,1]$ is two-dimensional, there exists a constant $c_2>0$ such that for all $f\in\mathcal{F}$, the corresponding harmonic function $f_0$ satisfies
\begin{align*}
\left\lVert f_0\right\rVert_{\infty} \leq c_2\left\lVert f_0\right\rVert_{\mu}. 
\end{align*}
Combining the previous inequalities,
\begin{align*}
\left\lVert f\right\rVert_{\infty} &\leq \left\lVert f-f_0\right\rVert_{\infty}  + \left\lVert f_0\right\rVert_{\infty} \\
&\leq \mE(f)^{\frac{1}{2}} +  c_2\left\lVert f_0\right\rVert_{\mu}\\
&\leq \mE(f)^{\frac{1}{2}} +  c_2\left\lVert f-f_0\right\rVert_{\mu}+c_2\left\lVert f\right\rVert_{\mu}\\
&\leq \left(1+c_2\right)\mE(f)^{\frac{1}{2}}+c_2\left\lVert f\right\rVert_{\mu}.
\end{align*}
\end{proof}
\begin{lem}
Let $f\in (C[0,1])_{\mu}^b$. Then, $\lim_{t\to 0}\left\lVert T_t^b f-f\right\rVert_{\infty}=0.$\label{strong_cont_i}
\end{lem}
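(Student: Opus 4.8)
The plan is to use that $(\bar T^b_t)_{t\ge 0}$ has already been shown to be a contraction semigroup on the Banach space $(C[0,1])^b_\mu$, so strong continuity at $t=0$ only needs to be verified on a dense subset and then propagated by an $\varepsilon/3$ argument. The subset I would use is the form domain: $\mathcal F$ when $b=N$ and $\mathcal F_0$ when $b=D$. Both contain every continuous piecewise linear function (with value $0$ at the endpoints in the Dirichlet case), since such a function is absolutely continuous with bounded, hence $L^2$, derivative; and piecewise linear functions approximate any continuous function uniformly on $[0,1]$, hence uniformly on $\supp(\mu)$, so they are dense in $(C[0,1])^b_\mu$. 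The reason one cannot apply Lemma~\ref{strong_cont_lem_1} to $f$ directly is that a general element of $(C[0,1])^b_\mu$ need not lie in $\mathcal F$; this is exactly what forces the reduction to a dense subset.

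First I would treat $f$ in the form domain. Expanding $f=\sum_{k\ge1}f^b_k\varphi^b_k$ in the eigenbasis, one has $\mE(f)=\sum_k\lambda^b_k(f^b_k)^2<\infty$, and $T^b_tf-f=\sum_k(e^{-\lambda^b_k t}-1)f^b_k\varphi^b_k$ again lies in the form domain because its energy $\sum_k\lambda^b_k(e^{-\lambda^b_k t}-1)^2(f^b_k)^2$ is bounded by $\mE(f)$. Hence Lemma~\ref{strong_cont_lem_1} applies to $g=T^b_tf-f$ and yields
\[
\|T^b_tf-f\|_\infty\le c_1\bigl(\mE(T^b_tf-f)^{1/2}+\|T^b_tf-f\|_\mu\bigr).
\]
The term $\|T^b_tf-f\|_\mu$ tends to $0$ as $t\to0$ by the strong continuity of $(T^b_t)_{t\ge0}$ on $L^2([0,1],\mu)$. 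For $\mE(T^b_tf-f)$, each summand $\lambda^b_k(e^{-\lambda^b_k t}-1)^2(f^b_k)^2$ tends to $0$ as $t\downarrow0$ and is dominated by the summable sequence $\lambda^b_k(f^b_k)^2$ (using $0\le 1-e^{-\lambda^b_k t}\le1$), so dominated convergence gives $\mE(T^b_tf-f)\to0$. Therefore $\|T^b_tf-f\|_\infty\to0$ for every $f$ in the form domain.

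Finally, for general $f\in(C[0,1])^b_\mu$ and $\varepsilon>0$, I would pick $g$ in the form domain with $\|f-g\|_\infty<\varepsilon$ and estimate, using that each $\bar T^b_t$ is a contraction on $(C[0,1])^b_\mu$,
\[
\|T^b_tf-f\|_\infty\le\|\bar T^b_t(f-g)\|_\infty+\|\bar T^b_tg-g\|_\infty+\|g-f\|_\infty\le 2\varepsilon+\|\bar T^b_tg-g\|_\infty,
\]
and the last term is $<\varepsilon$ for $t$ small by the previous step; letting $\varepsilon\to0$ finishes the proof. I expect the only real work to be the energy computation $\mE(T^b_tf-f)\to0$ together with the identification $\mE(u)=\sum_k\lambda^b_k u_k^2$ on the form domain; the density of piecewise linear functions and the contraction argument are routine.
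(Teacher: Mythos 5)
Your proof is correct and follows essentially the same route as the paper: establish the estimate on the form domain via Lemma \ref{strong_cont_lem_1}, then extend to all of $(C[0,1])_{\mu}^b$ by density and the contraction property. The only differences are cosmetic --- you prove $\mE(T_t^bf-f)\to 0$ explicitly by spectral decomposition and dominated convergence where the paper cites \cite[Lemma B.2.4]{KA}, and you get density of $\mathcal F_0$ in $(C[0,1])_{\mu}^D$ from piecewise linear interpolants where the paper subtracts the harmonic part.
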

\begin{proof}
We follow the proof of \cite[Proposition 5.2.6]{KA}. Let $f\in\mathcal{F}$. By Lemma \ref{strong_cont_lem_1} and \cite[Lemma B.2.4]{KA},
\begin{align*}
\lim_{t\to 0}\left\lVert T_t^bf-f\right\rVert_{\infty}\leq c_1\left(\lim_{t\to 0} \mE\left(T_t^bf-f\right)+\left\lVert T_t^bf-f\right\rVert_{\mu}\right) = 0.
\end{align*}
By the fact that $\mathcal{F}$ is dense in $(C[0,1])_{\mu}^N$ and that, for $t\geq 0$, $T_t^N$ is continuous on $(C[0,1])_{\mu}^N$, we obtain the assertion for $b=N$.
To verify the case $b=D$, we prove that $\mathcal{F}_0$ is dense in $(C[0,1])_{\mu}^D$. Let $f\in (C[0,1])_{\mu}^D$. Then, by the density of $\mathcal{F}$ in $(C[0,1])_{\mu}^N$, there exists a sequence $\left( f_n\right)_{n\in\mathbb{N}}$ with $f_n\in \mathcal{F}$ for each $n\in\mathbb{N}$ such that 
\begin{align}\label{dense_proof}
\left\lVert f-f_n\right\rVert_{\infty}\to 0, ~ n\to\infty.
\end{align}
 We define for $n\in\mathbb{N}$
\begin{align*}
f_{n,0}(x) \coloneqq f_n(x)-f_n(0)-x(f_n(1)-f_n(0)), ~ x\in[0,1],
\end{align*}
which is an element of $\mathcal{F}_0$. Further, we have that
\begin{align*}
f_0(x)\coloneqq f(x)-f(0)-x(f(1)-f(0)) = f(x), ~ x\in[0,1],
\end{align*}
since $f$ satisfies Dirichlet boundary conditions.
This along with \eqref{dense_proof} implies for $n\in\mathbb{N}$
\begin{align*}
&\lim_{n\to\infty}\left\lVert f_{n,0}- f\right\rVert_{\infty} \\
&=\lim_{n\to\infty}\left\lVert f_{n,0}- f_0\right\rVert_{\infty}\\
&\leq \lim_{n\to\infty}\sup_{x\in[0,1]}\left| f_n(x)-f(x) \right| + \left| f_n(0)-f(0)\right| + \left| x\left(f_n(1)-f_n(0)-(f(1)-f(0))\right)\right|\\
&=0.
\end{align*}
\end{proof}

The main result of this section now follows immediately.
\begin{kor} \label{semigroup_cor}
$\left(\bar T_t^b\right)_{t\geq 0}$ is a strongly continuous contraction semigroup on $(C[0,1])_{\mu}^b$.
\end{kor}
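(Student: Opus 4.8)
The plan is to assemble Corollary~\ref{semigroup_cor} from the pieces already in place. The semigroup property $\bar T_s^b \bar T_t^b = \bar T_{s+t}^b$ and $\bar T_0^b = \mathrm{id}$ are inherited directly from $(T_t^b)_{t\geq 0}$ on $L^2([0,1],\mu)$, since on the subspace $(C[0,1])_{\mu}^b$ the operators $\bar T_t^b$ are by definition the restrictions of $T_t^b$, and we have already checked the invariance $T_t^b\big((C[0,1])_{\mu}^b\big)\subseteq (C[0,1])_{\mu}^b$ using \eqref{semigroup_range}. The contraction property $\lVert \bar T_t^b f\rVert_\infty \leq \lVert f\rVert_\infty$ was established just above via the Markov property of $(T_t^b)_{t\geq 0}$ and the heat-kernel representation, noting that $\lVert\cdot\rVert_\infty$ here means the norm on $(C[0,1])_{\mu}^b$, i.e.\ the supremum over $\supp(\mu)$ of the canonical (piecewise-linear off $\supp(\mu)$) representative.

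The remaining ingredient is strong continuity, and here I would simply invoke Lemma~\ref{strong_cont_i}, which gives $\lim_{t\to 0}\lVert T_t^b f - f\rVert_\infty = 0$ for every $f\in (C[0,1])_{\mu}^b$, together with the contraction property to upgrade continuity at $0$ to continuity at every $t\geq 0$: for $t>0$ and $h>0$ one writes $\lVert \bar T_{t+h}^b f - \bar T_t^b f\rVert_\infty = \lVert \bar T_t^b(\bar T_h^b f - f)\rVert_\infty \leq \lVert \bar T_h^b f - f\rVert_\infty \to 0$, and for the left limit $\lVert \bar T_{t-h}^b f - \bar T_t^b f\rVert_\infty = \lVert \bar T_{t-h}^b(f - \bar T_h^b f)\rVert_\infty \leq \lVert \bar T_h^b f - f\rVert_\infty \to 0$. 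Thus $t\mapsto \bar T_t^b f$ is continuous on $[0,\infty)$ for every $f$, which is exactly strong continuity.

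Since every component of the statement has already been verified in the preceding lemmas and the displayed computations, there is genuinely no obstacle left: the corollary is the bookkeeping step that collects the semigroup property (immediate), the contraction estimate (the displayed Markov-property bound), and strong continuity (Lemma~\ref{strong_cont_i} plus the standard $\varepsilon$-shift argument using contractivity). If I were worried about any point, it would be making sure the norm $\lVert\cdot\rVert_\infty$ is consistently interpreted as the $(C[0,1])_{\mu}^b$-norm throughout — but this was fixed by the convention introduced right after the definition of the spaces, so no separate argument is needed.

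\begin{proof}
The semigroup property and $\bar T_0^b = \mathrm{id}$ are inherited from $(T_t^b)_{t\geq 0}$, using the invariance $T_t^b\big((C[0,1])_{\mu}^b\big)\subseteq (C[0,1])_{\mu}^b$ noted above. Contractivity was shown in the computation preceding Lemma~\ref{strong_cont_lem_1}. For strong continuity, Lemma~\ref{strong_cont_i} gives $\lim_{t\to 0}\lVert \bar T_t^b f - f\rVert_\infty = 0$ for all $f\in (C[0,1])_{\mu}^b$; combining this with contractivity, for $t>0$ and $h>0$ we have
\begin{align*}
\left\lVert \bar T_{t+h}^b f - \bar T_t^b f\right\rVert_{\infty} \leq \left\lVert \bar T_h^b f - f\right\rVert_{\infty} \to 0, \quad
\left\lVert \bar T_{t-h}^b f - \bar T_t^b f\right\rVert_{\infty} \leq \left\lVert \bar T_h^b f - f\right\rVert_{\infty} \to 0
\end{align*}
as $h\to 0$, so $t\mapsto \bar T_t^b f$ is continuous on $[0,\infty)$.
\end{proof}
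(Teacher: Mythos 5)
Your proposal is correct and assembles exactly the same ingredients the paper relies on: the invariance and semigroup property, the Markov-property contraction bound displayed before Lemma~\ref{strong_cont_lem_1}, and strong continuity from Lemma~\ref{strong_cont_i} (the paper simply states that the corollary ``follows immediately''). The extra shift argument upgrading continuity at $0$ to continuity at every $t\geq 0$ is a standard consequence and does not constitute a different route.
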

\section{Convergence results}\label{Convergence results}

\subsection{Strong Resolvent Convergence} \label{Strong Resolvent Convergence}
 
Let $\mu$ be defined as before and let $F$ be the distribution function of $\mu$. Further, let $\left(\mu_n\right)_{n\in\mathbb{N}}$ satisfy Assumption~\ref{ass:1} and let $F_n$ be the distribution function of $\mu_n$ for $n\in\mathbb{N}$.
% be a sequence of non-atomic Borel probability measures on $[0,1]$ such that $\{0,1\}\in \supp(\mu_n)$ and $\mu_n\rightharpoonup\mu, ~ n\to\infty$.
%We prove in this section that this implies strong resolvent convergence of the corresponding operators $\Delta_{\mu_n}^b$ with $b\in\{N,D\}$ on the space of measurable bounded functions on $[0,1]$, denoted by $B([0,1)].$ Of course, this implies 

First, we give convergence results for the generalized hyperbolic functions introduced in Section \ref{Generalized Hyperbolic Functions and the Resolvent Operator} using results from \cite{FME}.
Let $p_k$, $q_k$, $k\in\mathbb{N}$ be defined by $\mu$ and $p_{k,n}, q_{k,n}$, $k\in\mathbb{N}$ be defined by $\mu_{n}$ for $n\in\mathbb{N}$.
\begin{lem}\cite[Lemma 3.1]{FME}
For $x\in[0,1]$ and $k, n\in\mathbb{N}$ we have
\begin{align*}
\begin{aligned}
|q_{2k}(x)-q_{2k,n}(x)| &\leq 2\frac{\, \left\lVert F-F_n\right\rVert_{\infty} x^k}{\, (k-1)!}  ,&|p_{2k}(x)-p_{2k,n}(x)| &\leq   2\frac{\, \left\lVert F-F_n\right\rVert_{\infty} x^k}{\, (k-1)!},\\
|q_{2k+1}(x)-q_{2k+1,n}(x)| &\le 2\frac{\, \left\lVert F-F_n\right\rVert_{\infty} x^k}{\, (k-1)!}, &|p_{2k+1}(x)-p_{2k+1,n}(x)| &\le  2\frac{\, \left\lVert F-F_n\right\rVert_{\infty} x^k}{\, (k-1)!}. 
\end{aligned}\end{align*}
\end{lem}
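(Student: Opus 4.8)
The plan is to prove all four estimates simultaneously by induction on $k$, exploiting the recursive definition of the generalized monomials in Definition~\ref{monomial_def} together with a summation-by-parts / integration-by-parts device to convert integrals against $d\mu_n$ into integrals involving $F_n$. First I would record the base case: for $k=1$ one has, e.g., $q_2(x)-q_{2,n}(x) = \int_0^x\big(q_1(t)-q_{1,n}(t)\big)\,dt$ since the outer integration at even level is with respect to Lebesgue measure in both cases; and $q_1(x) = \int_0^x 1\,d\mu(t) = F(x)-F(0)$ while $q_{1,n}(x)=F_n(x)-F_n(0)$, so $|q_1(x)-q_{1,n}(x)|\le 2\|F-F_n\|_\infty$ (the factor $2$ absorbing the possibly nonzero values at $0$), whence $|q_2(x)-q_{2,n}(x)|\le 2\|F-F_n\|_\infty x = 2\|F-F_n\|_\infty x^1/0!$. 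The analogous computation works for $p$.

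For the inductive step, the key observation is that passing from level $2k-1$ or $2k$ to level $2k$ or $2k+1$ either adds an integration against Lebesgue measure — which contributes a harmless factor $\le x$ and preserves the bound up to the factorial bookkeeping — or adds an integration against $d\mu$ versus $d\mu_n$, which is where the difference $F-F_n$ enters. In the latter case, for the term $\int_0^x p_{2k-1}(t)\,d\mu(t) - \int_0^x p_{2k-1,n}(t)\,d\mu_n(t)$, I would split it as
\begin{align*}
\int_0^x \big(p_{2k-1}(t)-p_{2k-1,n}(t)\big)\,d\mu(t) + \left(\int_0^x p_{2k-1,n}(t)\,d\mu(t) - \int_0^x p_{2k-1,n}(t)\,d\mu_n(t)\right).
\end{align*}
The first piece is controlled by the inductive hypothesis and $\mu([0,x])\le 1$; the second piece I would handle by integration by parts, writing it as $-\int_0^x \big(F(t)-F_n(t)\big)\,dp_{2k-1,n}(t) + \big(F(x)-F_n(x)\big)p_{2k-1,n}(x) - \big(F(0)-F_n(0)\big)p_{2k-1,n}(0)$, then bounding $\|F-F_n\|_\infty$ out and using $\int_0^x |dp_{2k-1,n}(t)| = p_{2k-1,n}(x)$ (monotonicity) together with the a priori bound $p_{2k-1,n}(x)\le p_{2k-2,n}(x)\le x^{k-1}/(k-1)!$ from \eqref{trig_estimate1}. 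Collecting terms and tracking the factorials gives exactly the claimed $2\|F-F_n\|_\infty x^k/(k-1)!$; the constant $2$ is what makes the induction close, since it allows the $\|F-F_n\|_\infty$ terms picked up at each stage to be reabsorbed rather than accumulating.

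The main obstacle I anticipate is purely bookkeeping: one must carry four interlocking inequalities ($p_{2k}$, $p_{2k+1}$, $q_{2k}$, $q_{2k+1}$) through the induction, being careful that the $p$-recursion and $q$-recursion stagger the "$d\mu$ versus $dt$" steps differently, so that at a given parity the roles of Lebesgue and $\mu$ integration are swapped between $p$ and $q$. One must also be slightly careful about the endpoint contributions at $0$ in the integration-by-parts formula and about whether $\mu$, $\mu_n$ are probability measures with $F(0)$, $F_n(0)$ possibly nonzero — this is exactly why the bound carries a factor $2$ rather than $1$. None of this is deep, but it requires disciplined indexing. (Since the statement is quoted from \cite[Lemma 3.1]{FME}, I would in practice simply cite it; the above is the argument one would reconstruct.)
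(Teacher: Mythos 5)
First, note that the paper itself gives no proof of this lemma: it is quoted verbatim from \cite[Lemma 3.1]{FME}, so the only ``proof'' in the paper is the citation, and your closing remark that one would in practice cite it is exactly what the authors do. Your reconstruction does contain the correct central device --- splitting $\int p_{\cdot}\,d\mu - \int p_{\cdot,n}\,d\mu_n$ into a ``same measure, different integrand'' piece and a ``same integrand, different measure'' piece, and handling the latter by integration by parts against $F-F_n$ together with the a priori bounds \eqref{trig_estimate1} --- and that is indeed the engine of the argument in \cite{FME}.

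However, as written the proposal has concrete gaps. (1) The base case is computed with the wrong parity for $q$: by Definition~\ref{monomial_def}, odd steps of the $q$-recursion are Lebesgue integrations, so $q_1(x)=\int_0^x 1\,dt = x = q_{1,n}(x)$, not $F(x)-F(0)$, and $q_2(x)=\int_0^x q_1\,d\mu$ is the step where the measure first enters; your displayed identity $q_2-q_{2,n}=\int_0^x(q_1-q_{1,n})\,dt$ is therefore false. (2) The factor $2$ has nothing to do with values at $0$ (the measures are non-atomic, so $F(0)=F_n(0)=0$); it arises because the integration by parts produces two terms, the boundary term at $x$ and the total-variation term, each bounded by $\lVert F-F_n\rVert_\infty\, p_{\cdot,n}(x)$. (3) Most importantly, the assertion that the constant $2$ ``makes the induction close'' because the error terms are ``reabsorbed rather than accumulating'' is exactly the point that needs proof, and it is not true as stated: each $d\mu$-versus-$d\mu_n$ step adds a fresh contribution of size $2\lVert F-F_n\rVert_\infty p_{\cdot,n}(x)$ on top of the inherited bound, so the contributions do accumulate --- for instance your scheme gives $|p_3-p_{3,n}|\le 3\lVert F-F_n\rVert_\infty x$ and, at level $2k+1$, roughly $(2k+2)\lVert F-F_n\rVert_\infty x^k/k!$, which exceeds the claimed $2\lVert F-F_n\rVert_\infty x^k/(k-1)! = 2k\lVert F-F_n\rVert_\infty x^k/k!$. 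The $1/(k-1)!$ (rather than $1/k!$) in the statement is precisely the footprint of this linear-in-$k$ accumulation, not of reabsorption; closing the argument with the stated constants requires sharper bookkeeping (e.g.\ a telescoping sum over which $\mu$-slot is perturbed, with each of the $k$ resulting terms bounded by $2\lVert F-F_n\rVert_\infty x^k/k!$ after a single integration by parts) or simply deferring to \cite{FME}, as the paper does.
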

%\begin{proof}
%\cite[Lemma 3.1]{FME}.
%\item 
%Let $n\in\mathbb{N}$. We only verify the assertion for $\widetilde q_{2n}, n\in\mathbb{N}$ and follow the induction argument of \cite[Lemma 3.1]{FME} closely.
%By Fubini's theorem,
%\begin{align*}
%|q_{2}(x) - q_{2,n}(x) | &=  \left|\int_x^1 \int_y^1  \, dt \, d \mu (y) - \int_x^1 \int_y^1 \, dt \, d \mu_n (x) \right| \\
%&= \left|\int_x^1  \mu^w [x,t]   -   \mu_m^w[x,t] \, dt  \right|  \\
%&\le 2 \left\lVert F-F_n\right\rVert_{\infty}(1-x).
%\end{align*}
%Thereby verifies the base case.  Assume the assertion holds for some $k \in \mathbb{N}$. Then
%\begin{align*}
%|q_{2k+2}(x) - q_{2k+2,n}(x)| &= \left|\int_x^1 \int_y^1 q_{2k}(t)  \, dt \, d \mu (y) - \int_x^1 \int_y^1 q_{2k,n}(t) \, dt \, d \mu_n (t) \right| \\
%&= \left|\int_x^1  q_{2k}(t) \, \mu [x,t] \, dt  - \int_x^1  q_{2k,n}(r) \, \mu_n[x,t] \, dt  \right| \\
%&\le  \int_x^1  |(q_{2k}(r) - q_{2k,n}(r))| \, dt + \int_x^1  q_{2k,n}(t) \, \left|\mu [x,y] - \mu_n[x,y] \right|\, dt\\
%&\le  \int_x^1  \frac{2\left\lVert F-F_n\right\rVert_{\infty}(1-x)^n}{(n-1)!} dt + \int_x^1  \frac{(1-x)^n}{n!}2 \left\lVert F-F_n\right\rVert_{\infty} dt. \\
%&=  2 \left\lVert F-F_n\right\rVert_{\infty}\frac{(1-x)^{n+1}}{n!},
%\end{align*}  
%where we have used the induction hypothesis as well as Lemma \ref{tilde_p_estimate} in the last inequality.

%\end{proof}

\begin{bem}
Since the distribution function of $\mu$ is continuous, weak measure convergence implies uniform convergence of the corresponding distribution functions (see \cite[Section 8.1]{CTP}), which is the condition in \cite[Lemma 3.1]{FME}.
\end{bem}
For $z\in\mathbb{R}$ let $\cosh_z, ~\sinh_z$ be defined by $\mu$ and $\cosh_{z,n}, ~\sinh_{z,n}$ be defined by $\mu_{n}$ for $n\in\mathbb{N}$. 
We obtain a result for the generalized hyperbolic functions, comparable to that for the trigonometric functions in \cite{FME}. 

\begin{lem} \label{cosh_estimate}
Let $z\in\mathbb{R}$. Then, 
\begin{align*}
\left\lVert \cosh_z-\cosh_{z,n}\right\rVert_{\infty} &\leq  2z^2 e^{ z^2} \left\lVert F-F_n\right\rVert_{\infty} , \\
\left\lVert \cosh^{\prime}_z-\cosh^{\prime}_{z,n}\right\rVert_{\infty} &\leq  \left(z^2+2z^4 e^{ z^2}\right) \left\lVert F-F_n\right\rVert_{\infty},\\
\left\lVert \sinh_z-\sinh_{z,n}\right\rVert_{\infty}  &\leq   2z^3 e^{ z^2} \left\lVert F-F_n\right\rVert_{\infty}.
\end{align*}
\end{lem}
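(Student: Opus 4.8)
The plan is to bound each of the three quantities by plugging the series definitions into the supremum norm and estimating term by term, using the previous lemma (the one labelled by \cite[Lemma 3.1]{FME}) for the per-term differences $|p_{2k}-p_{2k,n}|$, $|q_{2k+1}-q_{2k+1,n}|$, etc., together with the crude bounds \eqref{trig_estimate1} on the generalized monomials themselves when a derivative has to be differentiated back into a monomial.

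First I would treat $\cosh_z$: write
\begin{align*}
\left\lVert \cosh_z - \cosh_{z,n}\right\rVert_{\infty}
= \left\lVert \sum_{k=0}^{\infty} z^{2k}\bigl(p_{2k}-p_{2k,n}\bigr)\right\rVert_{\infty}
\leq \sum_{k=1}^{\infty} |z|^{2k}\,\frac{2\left\lVert F-F_n\right\rVert_{\infty}}{(k-1)!},
\end{align*}
where the $k=0$ term drops out since $p_0=p_{0,n}=1$. Reindexing $k\mapsto k+1$ turns the sum into $2|z|^2\left\lVert F-F_n\right\rVert_{\infty}\sum_{k\geq 0} |z|^{2k}/k! = 2z^2 e^{z^2}\left\lVert F-F_n\right\rVert_{\infty}$, which is the first claimed bound. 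The $\sinh_z$ estimate is identical except that one picks up an extra factor $|z|$ from the odd index $z^{2k+1}$ in front of $q_{2k+1}$, giving $2|z|^3 e^{z^2}\left\lVert F-F_n\right\rVert_{\infty}$.

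The derivative estimate for $\cosh'_z$ is the one requiring care, since differentiating the series $\sum z^{2k} p_{2k}$ gives $\sum_{k\geq 1} z^{2k} p_{2k}' = \sum_{k\geq 1} z^{2k} p_{2k-1}$ (as $p_{2k}$ is an ordinary Lebesgue integral of $p_{2k-1}$, being an even index), and one must first justify termwise differentiation — here \eqref{trig_estimate1} gives $p_{2k-1}(x)\le p_{2k-2}(x)\le x^{k-1}/(k-1)!$, so the differentiated series converges uniformly and equals $(\cosh_z)'$. Then
\begin{align*}
\left\lVert \cosh'_z-\cosh'_{z,n}\right\rVert_{\infty}
\leq \sum_{k=1}^{\infty} |z|^{2k}\left\lVert p_{2k-1}-p_{2k-1,n}\right\rVert_{\infty}.
\end{align*}
The subtlety is that \cite[Lemma 3.1]{FME} bounds $|p_{2k+1}-p_{2k+1,n}|$ (odd-to-odd, one step) but here the index $2k-1$ forces a case split: for $k=1$ we have $p_1(x)=\mu([0,x])=F(x)$ and $p_{1,n}(x)=F_n(x)$, so $\left\lVert p_1-p_{1,n}\right\rVert_{\infty}=\left\lVert F-F_n\right\rVert_{\infty}$, contributing the bare $z^2\left\lVert F-F_n\right\rVert_{\infty}$ term; for $k\geq 2$, writing $2k-1 = 2(k-1)+1$ the lemma gives $\left\lVert p_{2k-1}-p_{2k-1,n}\right\rVert_{\infty}\leq 2\left\lVert F-F_n\right\rVert_{\infty} x^{k-1}/(k-2)!\leq 2\left\lVert F-F_n\right\rVert_{\infty}/(k-2)!$, and summing $\sum_{k\geq 2}|z|^{2k}/(k-2)!$ with a shift $k\mapsto k+2$ yields $2z^4 e^{z^2}\left\lVert F-F_n\right\rVert_{\infty}$. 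Adding the two pieces gives exactly $\left(z^2+2z^4 e^{z^2}\right)\left\lVert F-F_n\right\rVert_{\infty}$.

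The main obstacle, then, is not any deep idea but the bookkeeping around termwise differentiation of the defining series and the separate handling of the lowest-order term $p_1 = F$, which does not fit the uniform pattern of the quoted per-term estimates; everything else is geometric-series summation. I would present the $\cosh_z$ case in full, remark that $\sinh_z$ is analogous with an extra $|z|$, and spell out the $k=1$ versus $k\geq 2$ split for $\cosh'_z$.
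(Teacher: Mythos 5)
Your proposal is correct and follows essentially the same route as the paper: termwise application of the quoted per-term bounds, reindexing to an exponential series, and for $\cosh_z'$ the same split between the lowest term $p_1=F$ (handled directly via $|p_1-p_{1,n}|\leq\lVert F-F_n\rVert_\infty$) and the terms $k\geq 2$ covered by the lemma. The only addition is your explicit justification of termwise differentiation via \eqref{trig_estimate1}, which the paper leaves implicit.
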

\begin{proof}
Let $x \in [0,1]$ and $n\in\mathbb{N}$. Then,
\begin{align*}
\left|\cosh_z(x) - \cosh_{z,n} (x)\right| &\le  \sum_{k=1}^\infty |p_{2k}(x)-p_{2k,n}(x)|\, z^{2k} \\
&\le   \,  \sum_{k=1}^\infty  \frac{  2\left\lVert F-F_n\right\rVert_{\infty} }{\, (k-1)!}\, z^{2k} \\
&=  \,  \sum_{k=0}^\infty  \frac{ 2\left\lVert F-F_n\right\rVert_{\infty}}{\, k!}\, z^{2k+2} \\
&= \,  2z^2 e^{ z^2} \left\lVert F-F_n\right\rVert_{\infty}.
\end{align*}
Further, note that \[\cosh_z^{\prime}(x)=\sum_{k=1}^{\infty} p_{2k-1}(x)z^{2k}\] and \[\left|p_{1}(x)-p_{1,n}(x)\right|= \left|\mu([0,x])-\mu_n([0,x])\right| \leq \left\lVert F-F_n\right\rVert_{\infty}.\] With that,
\begin{align*}
\left|\cosh^{\prime}_z(x) - \cosh^{\prime}_{z,n} (x)\right| &\le  \sum_{k=1}^\infty |p_{2k-1}(x)-p_{2k-1,n}(x)|\, z^{2k} \\
&\le   \,  \left(z^2+2\sum_{k=2}^\infty  \frac{ z^{2k} }{\, (k-2)!}\,  \right)\left\lVert F-F_n\right\rVert_{\infty}\\
&\le \,  \left(z^2+2z^4 e^{ z^2}\right) \left\lVert F-F_n\right\rVert_{\infty}.
\end{align*}
Finally,
\begin{align*}
\left|\sinh_z(x) - \sinh_{z,n} (x)\right| &\le  \sum_{k=1}^\infty |q_{2k+1}(x)-q_{2k+1,n}(x)|\, z^{2k+1} \\
&\le   \,  \sum_{k=1}^\infty  \frac{  2\left\lVert F-F_n\right\rVert_{\infty}}{\, (k-1)!}\, z^{2k+1} \\
&\le   \,  \sum_{k=0}^\infty  \frac{  2\left\lVert F-F_n\right\rVert_{\infty}}{\, k!}\, z^{2k+3} \\
&\le \,  2z^3 e^{ z^2} \left\lVert F-F_n\right\rVert_{\infty},
\end{align*}
\end{proof}

We turn to the main result of this section. For $b\in\{N,D\}$ and $\lambda>0$, let $R_{\lambda}^b$ be defined by $\mu$ and  
$R_{\lambda,n}^b$ be defined by $\mu_{n}$. 
We assume $\supp(\mu)\subseteq \supp(\mu_n)$ for all $n\in\mathbb{N}$. Then, the mapping
\begin{align}
\pi_n: (C[0,1])_{\mu}^b\to  (C[0,1])_{\mu_n}^b,~  f\mapsto f\label{embedding}
\end{align}
 defines an embedding, where $f\in(C[0,1])_{\mu_n}^b$ denotes the $L^2([0,1],\mu_n)$-equivalence class of the representative of $f\in (C[0,1])_{\mu}^b$ that is linear on each interval $I\subseteq\supp(\mu_n)\setminus \supp(\mu)$.

%Note that $f\in B([0,1])$ implies $f\in L^2([0,1],\mu)\cap L^2([0,1],\mu_n)$ meaning that the corresponding resolvent operators are well-defined.
\begin{thm} \label{resolvent_conv}
Let $\lambda>0$. Then, for all $f\in (C[0,1])_{\mu}^b$,
\begin{align*}
\lim_{n\to\infty}\left\lVert R_{\lambda,n}^b\pi_n f-\pi_n R_{\lambda}^bf\right\rVert_{\infty} = 0.
\end{align*}
\end{thm}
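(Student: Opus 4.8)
The plan is to reduce the resolvent convergence to the convergence of the resolvent densities, which we have expressed as products of generalized hyperbolic functions, and then to invoke Lemma~\ref{cosh_estimate} together with the uniform convergence $\left\lVert F-F_n\right\rVert_{\infty}\to 0$ (which follows from Assumption~\ref{ass:1} and continuity of $F$). Concretely, fix $\lambda>0$ and $f\in(C[0,1])_{\mu}^b$; let $\widetilde f$ denote its canonical representative (continuous, linear on the gaps of $\supp(\mu)$). For $x\in[0,1]$ I would write
\begin{align*}
R_{\lambda,n}^b\pi_n f(x)-\pi_n R_{\lambda}^bf(x)
=\int_0^1\rho_{\lambda,n}^b(x,y)\widetilde f(y)\,d\mu_n(y)-\int_0^1\rho_{\lambda}^b(x,y)\widetilde f(y)\,d\mu(y),
\end{align*}
using that $\pi_n f$ is represented by $\widetilde f$ and that $\pi_n R_\lambda^b f$ is represented by the canonical extension of $R_\lambda^b f$ (which one should check is linear on the gaps of $\supp(\mu_n)$ — true because $\rho_\lambda^b(x,\cdot)$ and $\rho_\lambda^b(\cdot,y)$ are $\mu$-harmonic, hence affine, off $\supp(\mu)$). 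I would split this difference into two pieces:
\begin{align*}
\int_0^1\bigl(\rho_{\lambda,n}^b(x,y)-\rho_{\lambda}^b(x,y)\bigr)\widetilde f(y)\,d\mu_n(y)
\;+\;\left(\int_0^1\rho_{\lambda}^b(x,y)\widetilde f(y)\,d\mu_n(y)-\int_0^1\rho_{\lambda}^b(x,y)\widetilde f(y)\,d\mu(y)\right).
\end{align*}

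For the first piece, use the Corollary after the Proposition: $\rho_\lambda^N(x,y)=(\cosh'_{\sqrt\lambda}(1))^{-1}\cosh_{\sqrt\lambda}(x\wedge y)\cosh_{\sqrt\lambda}(1-x\vee y)$ and similarly for $b=D$ with $\sinh$. Writing the difference of the two products as a telescoping sum and applying Lemma~\ref{cosh_estimate} together with the sup-bounds \eqref{cosh_sup_estimate}, one gets $\left\lVert\rho_{\lambda,n}^b-\rho_\lambda^b\right\rVert_\infty\le C(\lambda)\left\lVert F-F_n\right\rVert_\infty$, once one also controls the prefactor: for $b=N$, $\cosh'_{\sqrt\lambda}(1)>0$ is bounded below (it is $\ge\lambda p_1(1)=\lambda>0$ in fact $\ge \sqrt\lambda\,?$ — in any case strictly positive and, by the $\cosh'$-estimate, $\cosh'_{\sqrt\lambda,n}(1)$ stays bounded away from $0$ for large $n$), and for $b=D$, $\sinh_{\sqrt\lambda}(1)\ge\sqrt\lambda\,q_1(1)=\sqrt\lambda>0$ uniformly. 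Hence this piece is bounded by $C(\lambda)\left\lVert\widetilde f\right\rVert_\infty\left\lVert F-F_n\right\rVert_\infty\mu_n([0,1])=C(\lambda)\left\lVert f\right\rVert_\infty\left\lVert F-F_n\right\rVert_\infty\to0$, uniformly in $x$. The main technical obstacle is precisely this uniform lower bound on the denominators $\cosh'_{\sqrt\lambda,n}(1)$ and $\sinh_{\sqrt\lambda,n}(1)$; for $\sinh$ it is immediate from $q_{1,n}(1)=1$, and for $\cosh'$ one argues $\cosh'_{\sqrt\lambda,n}(1)=\sum_{k\ge1}\lambda^k p_{2k-1,n}(1)\ge\lambda p_{1,n}(1)=\lambda\mu_n([0,1])=\lambda$.

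For the second piece, $y\mapsto\rho_\lambda^b(x,y)\widetilde f(y)$ is a fixed continuous function on $[0,1]$ (continuous in $y$ uniformly in $x$, since $\rho_\lambda^b$ is jointly continuous and bounded), so $\int\rho_\lambda^b(x,\cdot)\widetilde f\,d\mu_n\to\int\rho_\lambda^b(x,\cdot)\widetilde f\,d\mu$ by weak convergence $\mu_n\rightharpoonup\mu$. To make this uniform in $x$, I would note that the family $\{\rho_\lambda^b(x,\cdot)\widetilde f:x\in[0,1]\}$ is uniformly bounded and equicontinuous (again from joint continuity of $\rho_\lambda^b$, e.g. via the explicit $\sinh/\cosh$ representation and \eqref{cosh_sup_estimate}, which give a Lipschitz-type modulus independent of $x$), so by a standard Arzelà–Ascoli/portmanteau argument weak convergence is uniform over such a family; alternatively one can bound $\left|\int g\,d\mu_n-\int g\,d\mu\right|$ by an integration-by-parts estimate $\le \left\lVert F-F_n\right\rVert_\infty\,\mathrm{Var}(g)$ when $g$ has bounded variation, and check that $y\mapsto\rho_\lambda^b(x,y)\widetilde f(y)$ has total variation bounded uniformly in $x$ (true if $\widetilde f$ has bounded variation; for general continuous $\widetilde f$ one first approximates $f$ in $\left\lVert\cdot\right\rVert_\infty$ by smooth functions and uses the contraction property $\left\lVert R_{\lambda,n}^b\right\rVert\le1/\lambda$ uniformly in $n$ to pass to the limit). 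Combining the two estimates gives $\left\lVert R_{\lambda,n}^b\pi_n f-\pi_n R_\lambda^b f\right\rVert_\infty\le C(\lambda)(1+\left\lVert f\right\rVert_\infty)\left(\left\lVert F-F_n\right\rVert_\infty+\varepsilon_n(f)\right)\to0$, which is the claim.
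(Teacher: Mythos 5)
Your proposal is correct and follows essentially the same route as the paper: the same splitting of $R_{\lambda,n}^b\pi_n f-\pi_n R_{\lambda}^b f$ into a density-difference term (controlled by telescoping the product of generalized hyperbolic functions, Lemma \ref{cosh_estimate}, and the lower bounds $\cosh^{\prime}_{\sqrt{\lambda},n}(1)\geq\lambda$ and $\sinh_{\sqrt{\lambda},n}(1)\geq\sqrt{\lambda}$) plus a term handled by weak measure convergence. The only deviation is that you obtain uniformity in $x$ of the weak-convergence term via equicontinuity of the family $\{\rho_{\lambda}^b(x,\cdot)\widetilde f\}_{x\in[0,1]}$, whereas the paper factors the $x$-dependent hyperbolic factor out of the kernel; this is a minor technical variant (and if anything treats the uniformity more explicitly than the paper does), not a different approach.
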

\begin{proof}
We simplify the notation in this proof by omitting all embeddings. If we evaluate on $\supp(\mu_n)\setminus \supp(\mu)$, we always evaluate the representative that is linear on each interval $I\subseteq\supp(\mu_n)\setminus \supp(\mu)$.
First, we consider the case $b=N.$ Let $\lambda>0$, $n\in\mathbb{N}$, $x,y\in[0,1]$ with $x\leq y$. Using the triangle inequality, 
\begin{align}
\begin{split}
&\left|\rho_{\lambda}^N(x,y)-\rho_{\lambda,n}^N(x,y)\right| \\ 
&\leq \left|\left(\cosh^{\prime}_{\sqrt{\lambda}}(1)\right)^{-1}-\left(\cosh^{\prime}_{\sqrt{\lambda},n}(1)\right)^{-1}\right|\left|\cosh_{\sqrt{\lambda}}(x)\cosh_{\sqrt{\lambda}}(1-y)\right|\\
&+\left|\cosh_{\sqrt{\lambda}}(x)-\cosh_{\sqrt{\lambda},n}(x)\right|\left|\left(\cosh^{\prime}_{\sqrt{\lambda},n}(1)\right)^{-1}\cosh_{\sqrt{\lambda}}(1-y)\right|\\
&+\left|\cosh_{\sqrt{\lambda}}(1-y)-\cosh_{\sqrt{\lambda},n}(1-y)\right|\left|\left(\cosh^{\prime}_{\sqrt{\lambda},n}(1)\right)^{-1}\cosh_{\sqrt{\lambda},n}(x)\right|. \label{resolvent_convergence_proof}
\end{split}
\end{align}
We have 
\begin{align}
\cosh^{\prime}_{\sqrt{\lambda}}(1)=\sum_{n=1}^{\infty}\lambda^n p_{2n-1}(1)\geq \lambda p_1(1) = \lambda \label{lambda_estimate}
\end{align}
 and similarly $\cosh^{\prime}_{\sqrt{\lambda},n}(1)\geq \lambda$. Applying this along with Lemma \ref{cosh_estimate}, we get
\begin{align*}
\left|\left(\cosh^{\prime}_{\sqrt{\lambda}}(1)\right)^{-1}-\left(\cosh^{\prime}_{\sqrt{\lambda},n}(1)\right)^{-1}\right| 
&= \left|\frac{\cosh^{\prime}_{\sqrt{\lambda},n}(1)-\cosh^{\prime}_{\sqrt{\lambda}}(1)}{\cosh^{\prime}_{\sqrt{\lambda}}(1)\cosh^{\prime}_{\sqrt{\lambda},n}(1)}
\right| \\
&\leq  \frac{ \left(\lambda+2\lambda^2 e^{\lambda}\right)\left\lVert F-F_n\right\rVert_{\infty}}{\lambda^2}
\end{align*}
and thus with  \eqref{cosh_sup_estimate}
\begin{align*}
\left|\left(\cosh^{\prime}_{\sqrt{\lambda}}(1)\right)^{-1}-\left(\cosh^{\prime}_{\sqrt{\lambda},n}(1)\right)^{-1}\right|\left|\cosh_{\sqrt{\lambda}}(x)\cosh_{\sqrt{\lambda}}(1-y)\right| \leq \frac{\left(e^{2\lambda}+ 2\lambda e^{3 \lambda}\right) \left\lVert F-F_n\right\rVert_{\infty}}{\lambda}.
\end{align*}
For the second term on the right-hand side of inequality \eqref{resolvent_convergence_proof}, we calculate
\begin{align*}
\left|\cosh_{\sqrt{\lambda}}(x)-\cosh_{\sqrt{\lambda},n}(x)\right|\left|\left(\cosh^{\prime}_{\sqrt{\lambda},n}(1)\right)^{-1}\cosh_{\sqrt{\lambda}}(1-y)\right| &\leq 
 2e^{2\lambda}\left\lVert F-F_n\right\rVert_{\infty}.
\end{align*}
Treating the third term analogously and using the above calculations in \eqref{resolvent_convergence_proof} yields
\begin{align*}
\lim_{n\to\infty}\max_{x\in[0,1]}\left|\rho_{\lambda}^N(x,y)-\rho_{\lambda,n}^N(x,y)\right| &\leq 
\lim_{n\to\infty} \frac{ \left(e^{2\lambda}+ 2\lambda e^{3 \lambda}\right) \left\lVert F-F_n\right\rVert_{\infty}}{\lambda} + 4e^{2\lambda}\left\lVert F-F_n\right\rVert_{\infty}\\
&=\lim_{n\to\infty}\left(\frac{1}{\lambda}+2e^{\lambda}+4\right)  e^{ 2\lambda} \left\lVert F-F_n\right\rVert_{\infty}\\ & = 0.
\end{align*}
Further, by \eqref{cosh_sup_estimate} and \eqref{lambda_estimate},
\begin{align*}
&\left|\int_0^1 \rho_{\lambda}^N(x,y)f(y)d\mu(y)- \int_0^1 \rho_{\lambda}^N(x,y)f(y)d\mu_n(y)\right| \\
&\leq \left|\left(\cosh^{\prime}_{\sqrt{\lambda}}(1)\right)^{-1}\cosh_{\sqrt{\lambda}}(x) \right|\left|\int_0^1 \cosh_{\sqrt{\lambda}}(1-y)f(y)d\mu(y) - \int_0^1 \cosh_{\sqrt{\lambda}}(1-y)f(y)d\mu_n(y)\right|\\
&\leq \frac{e^{\lambda}}{\lambda}\left|\int_0^1 \cosh_{\sqrt{\lambda}}(1-y)f(y)d\mu(y) - \int_0^1 \cosh_{\sqrt{\lambda}}(1-y)f(y)d\mu_n(y)\right|.
\end{align*}
Due to weak measure convergence,
\begin{align*}
\lim_{n\to\infty} \int_0^1 \cosh_{\sqrt{\lambda}}(1-y)f(y)d\mu_n(y) - \int_0^1 \cosh_{\sqrt{\lambda}}(1-y)f(y)d\mu(y) = 0
\end{align*}
and consequently,
\begin{align*}
\lim_{n\to\infty}\max_{x\in[0,1]} \left|\int_0^1 \rho_{\lambda}^N(x,y)f(y)d\mu(y)- \int_0^1 \rho_{\lambda}^N(x,y)f(y)d\mu_n(y)\right| = 0.
\end{align*}
We get the same result for $x\geq y$ and obtain
\begin{align*}
&\lim_{n\to\infty}\max_{x\in[0,1]}\left|R_{\lambda,n}^Nf(x)-R_{\lambda}^Nf(x)\right|\\ 
&\leq\lim_{n\to\infty}\max_{x\in[0,1]}\left|\int_0^1 \rho_{\lambda}^N(x,y)f(y)d\mu(y)- \int_0^1 \rho_{\lambda}^N(x,y)f(y)d\mu_n(y)\right| \\ &~~+\lim_{n\to\infty}\max_{x\in[0,1]}
\left|\int_0^1 \left(\rho_{\lambda}^N(x,y)-\rho_{\lambda,n}^N(x,y)\right) f(y)d\mu_n\right|\\
&=0.
\end{align*}
Now, let $b=D$. 
Again using the triangle inequality, for $n\in\mathbb{N}$, $x,y\in[0,1]$, $x\leq y$,
\begin{align}
\begin{split}
&\left|\rho_{\lambda}^D(x,y)-\rho_{\lambda,n}^D(x,y)\right| \\ 
&\leq \frac{1}{\sqrt{\lambda}}\Bigg( \left|\left(\sinh_{\sqrt{\lambda}}(1)\right)^{-1}-\left(\sinh_{\sqrt{\lambda},n}(1)\right)^{-1}\right|\left|\sinh_{\sqrt{\lambda}}(x)\sinh_{\sqrt{\lambda}}(1-y)\right|\\
&+\left|\sinh_{\sqrt{\lambda}}(x)-\sinh_{\sqrt{\lambda},n}(x)\right|\left|\left(\sinh_{\sqrt{\lambda},n}(1)\right)^{-1}\sinh_{\sqrt{\lambda}}(1-y)\right|\\
&+\left|\sinh_{\sqrt{\lambda}}(1-y)-\sinh_{\sqrt{\lambda},n}(1-y)\right|\left|\left(\sinh^{\prime}_{\sqrt{\lambda},n}(1)\right)^{-1}\sinh_{\sqrt{\lambda},n}(x)\right|\Bigg). \label{resolvent_convergence_proof_2}
\end{split}
\end{align}
We have 
\begin{align*}
\sinh_{\sqrt{\lambda}}(1)=\sum_{n=0}^{\infty}\lambda^{n+\frac{1}{2}} q_{2n+1}(1)\geq \sqrt{\lambda} q_1(1) = \sqrt{\lambda} 
\end{align*}
and thus 
\begin{align*}
\left|\left(\sinh_{\sqrt{\lambda}}(1)\right)^{-1}-\left(\sinh_{\sqrt{\lambda},n}(1)\right)^{-1}\right| 
&\leq   2\sqrt{\lambda} e^{\lambda}\left\lVert F-F_n\right\rVert_{\infty}.
\end{align*}
Arguing in the same way as before, we get
\begin{align*}
\lim_{n\to\infty}\max_{x\in[0,1]}\left|\rho_{\lambda}^D(x,y)-\rho_{\lambda,n}^D(x,y)\right| &\leq 
\lim_{n\to\infty}\frac{2}{\sqrt{\lambda}}\sqrt{\lambda}  e^{ \lambda} \left\lVert F-F_n\right\rVert_{\infty}\lambda e^{2\lambda}\\ &~~~+\lim_{n\to\infty} \frac{4}{\sqrt{\lambda}} \lambda^{\frac{3}{2}} e^{\lambda}\left\lVert F-F_n\right\rVert_{\infty} e^{\lambda}\\
&=\lim_{n\to\infty}\left(2e^{\lambda}+4\right)  \lambda e^{ 2\lambda}\left\lVert F-F_n\right\rVert_{\infty}\\
&=0.
\end{align*}
Further,
\begin{align*}
&\max_{x\in[0,1]}\left|\int_0^1 \rho_{\lambda}^D(x,y)f(y)d\mu(y)- \int_0^1 \rho_{\lambda}^D(x,y)f(y)d\mu_n(y)\right| \\
&\leq \max_{x\in[0,1]}\left|\left(\sqrt{\lambda}\sinh_{\sqrt{\lambda}}(1)\right)^{-1}\sinh_{\sqrt{\lambda}}(x) \right|\bigg|\int_0^1 \sinh_{\sqrt{\lambda}}(1-y)f(y)d\mu(y) \\ &\hspace{7cm}- \int_0^1 \sinh_{\sqrt{\lambda}}(1-y)f(y)d\mu_n(y)\bigg|\\
&\leq \left|\left(\sqrt{\lambda}\sinh_{\sqrt{\lambda}}(1)\right)^{-1}\right|\left\lVert\sinh_{\sqrt{\lambda}}\right\rVert_{\infty}\bigg|\int_0^1 \sinh_{\sqrt{\lambda}}(1-y)f(y)d\mu(y)\\&\hspace{7cm} - \int_0^1 \sinh_{\sqrt{\lambda}}(1-y)f(y)d\mu_n(y)\bigg|.\\
\end{align*}
Due to the weak measure convergence, this goes to zero as $n$ tends to $\infty$. 
Deducing the same result for $x\geq y$ and combining the above inequalities,
\begin{align*}
\lim_{n\to\infty}\max_{x\in[0,1]}&\left|R_{\lambda,n}^Df(x)-R_{\lambda}^Df(x)\right|\\ 
&\leq \lim_{n\to\infty}\max_{x\in[0,1]} \left|\int_0^1 \rho_{\lambda}^D(x,y)f(y)d\mu(y)- \int_0^1 \rho_{\lambda}^D(x,y)f(y)d\mu_n(y)\right| \\ &~~+\lim_{n\to\infty}\max_{x\in[0,1]}
\left|\int_0^1 \left(\rho_{\lambda}^D(x,y)-\rho_{\lambda,n}^D(x,y)\right)f(y)d\mu_n\right|\\
&=0.
\end{align*}
\end{proof}

\subsection{Graph Norm Convergence}\label{Graph Norm Convergence}

Let $\mu$ be defined as before
and let $\lambda>0$.  Analogously to the restricted semigroup, we define the restricted resolvent operator by
\begin{align*}
\bar R_{\lambda}^N: (C[0,1])_{\mu}^N\to (C[0,1])_{\mu}^N, ~ \bar R_{\lambda}^N f = R_{\lambda}^Nf, \\
\bar R_{\lambda}^D: (C[0,1])_{\mu}^D\to (C[0,1])_{\mu}^D, ~ \bar R_{\lambda}^D f = R_{\lambda}^Df.
\end{align*} 
Further, we define the operators $\bar \Delta_{\mu}^N$ and $\bar \Delta_{\mu}^D$ by
\begin{align*}
\bar \Delta_{\mu}^N f \coloneqq \Delta_{\mu}^N f, ~~  \mathcal{D}\left(\bar \Delta_{\mu}^N\right)\coloneqq \left\{ f\in\mathcal{D}\left(\Delta_{\mu}^N\right): \Delta_{\mu}^Nf\in (C[0,1])_{\mu}^N\right\},\\
\bar \Delta_{\mu}^D f \coloneqq \Delta_{\mu}^D f, ~~ \mathcal{D}\left(\bar \Delta_{\mu}^D\right)\coloneqq \left\{ f\in\mathcal{D}\left(\Delta_{\mu}^D\right): \Delta_{\mu}^Df\in (C[0,1])_{\mu}^D\right\},
\end{align*}
which are called the part of the operator $\Delta_{\mu}^N$ in $C[0,1])_{\mu}^N$ and the part of the operator $\Delta_{\mu}^D$ in $C[0,1])_{\mu}^D$, respectively.
The following Lemma shows how the restricted semigroup, the restricted resolvent and the part of the operator are connected. For that, let $b\in\{N,D\}$.
\begin{lem}
\begin{enumerate}[label=(\roman*)]
\item The infinitesimal generator of the strongly continuous contraction semigroup $\left(\bar T_t^b\right)_{t\geq 0}$ is $\bar  \Delta_{\mu}^b$. \label{restricted_lem_i}
\item $\bar R_{\lambda}^b$ is the resolvent of $\bar \Delta_{\mu}^b$.\label{restricted_lem_ii}
\end{enumerate}
\end{lem}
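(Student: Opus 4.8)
The strategy is to exploit the abstract fact that the part of a generator in a closed subspace, when that subspace is invariant under the semigroup, is again the generator of the restricted semigroup; the main work is just verifying the hypotheses and then deducing (ii) from (i). I would proceed as follows.

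\textbf{Step 1: identify the generator of $\left(\bar T_t^b\right)_{t\geq 0}$ abstractly.} By Corollary~\ref{semigroup_cor}, $\left(\bar T_t^b\right)_{t\geq 0}$ is a strongly continuous contraction semigroup on the Banach space $(C[0,1])_{\mu}^b$, so it has a well-defined infinitesimal generator; call it $A^b$, with domain $\mathcal{D}(A^b)=\{f\in (C[0,1])_{\mu}^b:\lim_{t\to 0}t^{-1}(\bar T_t^bf-f)\text{ exists in }\|\cdot\|_\infty\}$. The claim of (i) is that $A^b=\bar\Delta_\mu^b$.

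\textbf{Step 2: show $\bar\Delta_\mu^b\subseteq A^b$.} Let $f\in\mathcal{D}(\bar\Delta_\mu^b)$, i.e. $f\in\mathcal{D}(\Delta_\mu^b)$ with $\Delta_\mu^bf\in (C[0,1])_\mu^b$. Since $\Delta_\mu^b$ generates $\left(T_t^b\right)_{t\geq 0}$ on $L^2([0,1],\mu)$, we have $T_t^bf-f=\int_0^t T_s^b\Delta_\mu^bf\,ds$ as an $L^2$-valued identity. Now $g\coloneqq\Delta_\mu^bf\in (C[0,1])_\mu^b$, and by strong continuity of $\left(\bar T_t^b\right)_{t\geq 0}$ (Corollary~\ref{semigroup_cor}) the map $s\mapsto \bar T_s^bg$ is continuous into $(C[0,1])_\mu^b$; hence $\int_0^t \bar T_s^bg\,ds$ is a well-defined $(C[0,1])_\mu^b$-valued Bochner integral, and it agrees $\mu$-a.e.\ (indeed as the canonical continuous representative) with the $L^2$-integral. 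Therefore $\bar T_t^bf-f=\int_0^t\bar T_s^bg\,ds$ in $(C[0,1])_\mu^b$, so $t^{-1}(\bar T_t^bf-f)\to g$ in $\|\cdot\|_\infty$ by continuity of the integrand at $s=0$. Thus $f\in\mathcal{D}(A^b)$ and $A^bf=g=\bar\Delta_\mu^bf$.

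\textbf{Step 3: show $A^b\subseteq\bar\Delta_\mu^b$, equivalently the reverse inclusion of domains.} This is where one needs a bit of care, and I expect it to be the main obstacle. One clean route: fix $\lambda>0$ and recall from the excerpt's Lemma (resolvent density) that $R_\lambda^b$ maps $L^2([0,1],\mu)$ into continuous functions (its kernel $\rho_\lambda^b(x,y)$ is a product of generalized hyperbolic functions, which are continuous), and in the Dirichlet case into functions vanishing at $0,1$; moreover $R_\lambda^b(L^2)\subseteq\mathcal{D}(\Delta_\mu^b)$ with $\Delta_\mu^bR_\lambda^bf=\lambda R_\lambda^bf-f$. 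Hence $R_\lambda^b$ restricts to a bounded operator $\bar R_\lambda^b$ on $(C[0,1])_\mu^b$ whose range lies in $\mathcal{D}(\bar\Delta_\mu^b)$, and $(\lambda-\bar\Delta_\mu^b)\bar R_\lambda^bf=f$ for all $f\in (C[0,1])_\mu^b$; conversely $\bar R_\lambda^b(\lambda-\bar\Delta_\mu^b)f=f$ for $f\in\mathcal{D}(\bar\Delta_\mu^b)$ since this already holds in $L^2$. So $\bar R_\lambda^b=(\lambda-\bar\Delta_\mu^b)^{-1}$ exists as a bounded everywhere-defined operator. On the other hand, for the genuine generator $A^b$ of the contraction semigroup $\left(\bar T_t^b\right)_{t\geq 0}$, the Hille--Yosida theorem gives that $\lambda-A^b$ is also boundedly invertible on $(C[0,1])_\mu^b$ with $(\lambda-A^b)^{-1}f=\int_0^\infty e^{-\lambda t}\bar T_t^bf\,dt$. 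By Step~2, $\bar\Delta_\mu^b\subseteq A^b$, so $(\lambda-A^b)$ extends the bijection $(\lambda-\bar\Delta_\mu^b):\mathcal{D}(\bar\Delta_\mu^b)\to (C[0,1])_\mu^b$; an injective extension of a bijection onto the same space must coincide with it, forcing $\mathcal{D}(A^b)=\mathcal{D}(\bar\Delta_\mu^b)$ and $A^b=\bar\Delta_\mu^b$. This proves (i).

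\textbf{Step 4: deduce (ii).} The computation in Step~3 already showed that $\bar R_\lambda^b=(\lambda-\bar\Delta_\mu^b)^{-1}$ for every $\lambda>0$, i.e.\ $\bar R_\lambda^b$ is exactly the resolvent of $\bar\Delta_\mu^b$; equivalently, by (i), it is the resolvent of the generator $A^b$ of $\left(\bar T_t^b\right)_{t\geq 0}$, consistent with $\bar R_\lambda^bf=\int_0^\infty e^{-\lambda t}\bar T_t^bf\,dt$. The only points requiring verification are the mapping properties of $R_\lambda^b$ on continuous functions and the boundary behaviour in the Dirichlet case, both of which are immediate from the explicit kernel representation in the Corollary after the Proposition on generalized hyperbolic functions (continuity of $\cosh_z,\sinh_z$ and the values $\sinh_z(0)=0$). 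The one subtlety to watch throughout is the identification of $L^2([0,1],\mu)$-equivalence classes with their canonical continuous representatives (the one linear on the gaps of $\supp\mu$), so that Bochner integrals computed in $L^2$ and in $(C[0,1])_\mu^b$ agree; this is exactly the convention fixed before the statement, so no new argument is needed.
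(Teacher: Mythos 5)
Your proposal is correct, but it takes a more self-contained route than the paper. The paper disposes of part (i) by verifying three hypotheses --- the inclusion $(C[0,1])_{\mu}^b\hookrightarrow L^2([0,1],\mu)$ is continuous because $\lVert f\rVert_{\mu}\leq\lVert f\rVert_{\infty}$, the subspace is invariant under $(T_t^b)_{t\geq 0}$, and the restriction is strongly continuous (Corollary on the restricted semigroup) --- and then citing the abstract ``part of the generator'' proposition from Engel--Nagel; for part (ii) it identifies $\bar R_{\lambda}^b$ with the resolvent of $\bar\Delta_{\mu}^b$ via the Laplace-transform formula $\int_0^{\infty}e^{-\lambda s}\bar T_s^b f\,ds$, which visibly coincides with $R_{\lambda}^b f$ on the subspace. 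You instead reprove the content of that abstract proposition: your Step 2 is the standard argument that the part of the generator is contained in the generator of the restriction (via the identity $T_t^bf-f=\int_0^t T_s^b\Delta_{\mu}^bf\,ds$ and compatibility of Bochner integrals under the continuous injective embedding), and your Step 3 closes the gap by showing $\lambda-\bar\Delta_{\mu}^b$ is already bijective onto $(C[0,1])_{\mu}^b$ --- using the explicit resolvent kernel to check that $R_{\lambda}^b$ preserves continuity and the Dirichlet boundary values --- so that the injective extension $\lambda-A^b$ must equal it. Both arguments are sound; the paper's is shorter because it outsources the functional-analytic core to a citation, while yours makes concrete use of the kernel representation $\rho_{\lambda}^b$ (continuity of $\cosh_z$, $\sinh_z$ and $\sinh_z(0)=0$) and delivers (ii) as a byproduct of the same computation rather than via the Laplace transform. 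The one point you flag as a subtlety --- identifying $L^2$-classes with their canonical continuous representatives so the two Bochner integrals agree --- is indeed the only place where care is needed, and your treatment of it is adequate.
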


\begin{proof}
For all $f\in L^2([0,1],\mu)$, it holds $\left\lVert f\right\rVert_{\infty}\geq \left\lVert f\right\rVert_{\mu}$, therefore the inclusion map $i:(C[0,1])_{\mu}^b\to L^2([0,1],\mu), ~ f\mapsto f$ is continuous. Moreover, $\left(\bar T_t^b\right)_{t\geq 0}$ defines a strongly continuous contraction semigroup on $(C[0,1])_{\mu}^b$  and  $(C[0,1])_{\mu}^b$ is $\left(\bar T_t^b\right)_{t\geq 0}$-invariant (see Corollary \ref{semigroup_cor}). We thus can  apply \cite[II.2.3 Proposition]{ENO} to verify \ref{restricted_lem_i}. \smallskip  
We turn to part \ref{restricted_lem_ii}. Let $\lambda>0$ and let $\widetilde R_{\lambda}^b$ be the resolvent of $\bar \Delta_{\mu}^b$. 
 By part \ref{restricted_lem_i} and \cite[1.10 Theorem]{ENO}, this operator is well-defined and given by  
\begin{align*}
\widetilde R_{\lambda}^bf=\int_0^{\infty} e^{-\lambda s} \bar T_s^bfds, ~ f\in (C[0,1])_{\mu}^b.
\end{align*}
Further, by definition of $\left(\bar T_t^b\right)_{t\geq 0}$ and $\bar R_{\lambda}^b$,
\begin{align*}
\bar R_{\lambda}^bf = R_{\lambda}^bf =  
\int_0^{\infty} e^{-\lambda s} T_s^bfds = \int_0^{\infty} e^{-\lambda s} \bar T_s^bfds,  ~ f\in (C[0,1])_{\mu}^b.
\end{align*}
It follows $\widetilde R_{\lambda}^b =  \bar R_{\lambda}^b$ on $(C[0,1])_{\mu}^b$. 
\end{proof}

We are now able to establish graph norm convergence. To this end,
let $\left(\mu_n\right)_{n\in\mathbb{N}}$ satisfy Assumption~\ref{ass:1} and we assume $\supp(\mu)\subseteq \supp(\mu_n)$ for all $n\in\mathbb{N}$.
% be a sequence of a non-atomic Borel probability measures on $[0,1]$ with $\{0,1\}\in\supp(\mu_n)$ converging to $\mu$ weakly. We further assume that $\supp(\mu)\subseteq \supp(\mu_n)$ for all $n\in\mathbb{N}$. 
%Then, the mapping 
%\begin{align}
%\pi_n: (C[0,1])_{\mu}^b\to  (C[0,1])_{\mu_n}^b, ~ f\mapsto f \label{embedding}
%\end{align}
%defines a continuous embedding, where $f\in(C[0,1])_{\mu_n}^b$ denotes the $L^2([0,1],\mu_n)$-equivalence class of the representative of $f\in (C[0,1])_{\mu}^b$ that is linear on all intervals in $\supp(\mu_n)\setminus \supp(\mu)$.
\begin{thm}\label{graph_norm_conv}
Let $b\in\{N,D\}$. For $f\in \mathcal{D}\left(\bar \Delta_{\mu}^b\right)$ there exists  $(f_n)_{n\in\mathbb{N}}$ with $f_n\in \mathcal{D}\left(\bar \Delta_{\mu_n}^b\right)$ such that for $n\in\mathbb{N}$
\begin{align*}
\lim_{n\to\infty} \left\lVert \pi_n f-f_n\right\rVert_{\infty}
+\left\lVert \pi_n \bar \Delta_{\mu}^b f-\bar \Delta_{\mu_n}^b f_n\right\rVert_{\infty} = 0.
\end{align*}
\end{thm}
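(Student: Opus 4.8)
The plan is to deduce graph norm convergence from the strong resolvent convergence established in Theorem~\ref{resolvent_conv}, using the standard correspondence between convergence of resolvents and convergence of generator graphs. Fix $f\in\mathcal{D}(\bar\Delta_\mu^b)$ and set $g\coloneqq (\lambda-\bar\Delta_\mu^b)f\in(C[0,1])_\mu^b$ for some fixed $\lambda>0$; then $f=\bar R_\lambda^b g$ by Lemma~\ref{restricted_lem_ii}. The natural candidate for the approximating sequence is
\begin{align*}
f_n\coloneqq \bar R_{\lambda,n}^b\,\pi_n g\in\mathcal{D}\left(\bar\Delta_{\mu_n}^b\right),
\end{align*}
which lies in $\mathcal{D}(\bar\Delta_{\mu_n}^b)$ precisely because it is in the range of the resolvent $\bar R_{\lambda,n}^b$ (again Lemma~\ref{restricted_lem_ii} applied to $\mu_n$).

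\textbf{Convergence of the functions.} First I would show $\lVert\pi_n f-f_n\rVert_\infty\to 0$. By the two displays above,
\begin{align*}
\pi_n f-f_n=\pi_n\bar R_\lambda^b g-\bar R_{\lambda,n}^b\pi_n g,
\end{align*}
which tends to $0$ in the uniform norm by Theorem~\ref{resolvent_conv} applied to the function $g\in(C[0,1])_\mu^b$. This is immediate.

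\textbf{Convergence of the images under the generators.} For the second term, I would use the resolvent identity on each space: since $f=\bar R_\lambda^b g$ we have $\bar\Delta_\mu^b f=\lambda f-g=\lambda\bar R_\lambda^b g-g$, and likewise $\bar\Delta_{\mu_n}^b f_n=\lambda f_n-\pi_n g=\lambda\bar R_{\lambda,n}^b\pi_n g-\pi_n g$. Since $\pi_n$ commutes with scalar multiplication and sends $g$ to $\pi_n g$,
\begin{align*}
\pi_n\bar\Delta_\mu^b f-\bar\Delta_{\mu_n}^b f_n
=\lambda\bigl(\pi_n\bar R_\lambda^b g-\bar R_{\lambda,n}^b\pi_n g\bigr)-\bigl(\pi_n g-\pi_n g\bigr)
=\lambda\bigl(\pi_n f-f_n\bigr),
\end{align*}
so the norm of this difference is exactly $\lambda$ times the quantity already shown to vanish. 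Adding the two estimates gives
\begin{align*}
\lVert\pi_n f-f_n\rVert_\infty+\lVert\pi_n\bar\Delta_\mu^b f-\bar\Delta_{\mu_n}^b f_n\rVert_\infty=(1+\lambda)\lVert\pi_n\bar R_\lambda^b g-\bar R_{\lambda,n}^b\pi_n g\rVert_\infty\to 0.
\end{align*}

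\textbf{The main obstacle.} The argument above is essentially bookkeeping once Theorem~\ref{resolvent_conv} is in hand; the only genuine point to be careful about is the interaction of the embeddings $\pi_n$ with the identities $\bar\Delta_\mu^b\bar R_\lambda^b=\lambda\bar R_\lambda^b-\mathrm{Id}$, i.e.\ checking that $\pi_n$ indeed intertwines scalar multiplication and the identity map in the sense used above (it does, being linear and defined by taking a representative), and that $g=(\lambda-\bar\Delta_\mu^b)f$ is a legitimate element of $(C[0,1])_\mu^b$ — which holds by the very definition of $\mathcal{D}(\bar\Delta_\mu^b)$, since $\Delta_\mu^b f\in(C[0,1])_\mu^b$ and $f\in(C[0,1])_\mu^b$. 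Hence no real difficulty remains beyond invoking the already-proved resolvent convergence.
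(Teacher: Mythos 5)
Your proposal is correct and follows essentially the same route as the paper: choose $g=(\lambda-\bar\Delta_\mu^b)f$, set $f_n=\bar R_{\lambda,n}^b\pi_n g$, apply Theorem~\ref{resolvent_conv} to get $\lVert\pi_n f-f_n\rVert_\infty\to 0$, and observe that $\pi_n\bar\Delta_\mu^b f-\bar\Delta_{\mu_n}^b f_n=\lambda(\pi_n f-f_n)$. No substantive differences.
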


\begin{proof}
Let $\lambda>0$, $f\in \mathcal{D}\left(\bar \Delta_{\mu}^b\right)$  and $g\coloneqq \left(\lambda - \bar \Delta_{\mu}^b\right)f$. Then, $f = \bar R_{\lambda}^b g$ and we define $f_n\coloneqq \bar R_{\lambda,n}^b \pi_n g$. Applying Theorem \ref{resolvent_conv},
\begin{align}
\lim_{n\to\infty}\left\lVert \pi_n f-f_n\right\rVert_{\infty} = 0. \label{graph_norm_proof}
\end{align}
%Since this inequality holds for all $\lambda>0$, we can choose $\lambda=\frac{1}{3}$ and get
%\begin{align*}
%\left\lVert \pi_n f-f_n\right\rVert_{\infty}\leq \left(5e^{\frac{2}{3}}+3e\right)\left(\frac{1}{3}\left\lVert f\right\rVert_{\infty}+ \left\lVert \bar \Delta_{\mu}^N f\right\rVert_{\infty}\right)\left\lVert \mu-\mu_n\right\rVert_{TV}
%\end{align*}
Further, 
\begin{align*}
\bar \Delta_{\mu}^b f = \lambda f-\left(\lambda - \bar \Delta_{\mu}^b f\right)f = \lambda f - g
\end{align*}
and
\begin{align*}
\bar \Delta_{\mu_n}^b f_n = \lambda f_n-\left(\lambda - \bar \Delta_{\mu_n}^b\right)f_n = \lambda f_n - \pi_n g.
\end{align*}
It follows
\begin{align*}
\left\lVert \pi_n \bar \Delta_{\mu}^b f-\bar \Delta_{\mu_n}^b f_n\right\rVert_{\infty} = \lambda \left\lVert \pi_n f-f_n\right\rVert_{\infty}
\end{align*}
and thus, by \eqref{graph_norm_proof},
\begin{align*}
\lim_{n\to\infty}\left\lVert \pi_n \bar \Delta_{\mu}^b f-\bar \Delta_{\mu_n}^b f_n\right\rVert_{\infty} = 0.
\end{align*}
\end{proof}

\subsection{Strong Semigroup Convergence}\label{Strong Semigroup Convergence}

For $b\in\{N,D\}$ let $\left(T_t^b\right)_{t\geq 0}$ be defined by $\mu$, $\left( T_{t,n}^b\right)_{t\geq 0}$ be defined by $\mu_{n}$ and analogously 
the restricted semigroups $\left(\bar T_t^b\right)_{t\geq 0}$ and $\left(\bar T_{t,n}^b\right)_{t\geq 0}$ be defined by $\mu$ and $\mu_n$, respectively. 
The main result of this paper is a direct consequence of the previous results.

\begin{proof}[Proof of Theorem \ref{strong_semigroup_con_theorem}]
For $n\in\mathbb{N}$, $\pi_n$ is a bounded linear transformation between Banach spaces. Further, $\left(\bar T_t^b\right)_{t\geq 0}$ and $\left(\bar T_{t,n}^b\right)_{t\geq 0}, ~ n\in\mathbb{N}$ are strongly continuous contraction semigroups on their respective spaces (see Corollary \ref{semigroup_cor}). Hence, due to \cite[Theorem 6.1]{EKM}, the assertion is a direct consequence of Theorem \ref{graph_norm_conv}.
\end{proof}

Strong semigroup convergence can be interpreted as convergence of  solutions to heat equations. The connection is given as follows (see \cite[Proposition 6.2]{ENO}).

\begin{lem}
Let $A$ be the generator of a strongly continuous semigroup $\left(S_t\right)_{t\geq 0}$ on a Banach space $X$. Then, for each $f\in \mathcal{D}(A)$ the abstract heat equation
\begin{align}\label{classical_heat_equation}
\begin{split}
\frac{\partial u}{\partial t}(t) &= Au(t), ~ t\geq 0\\
u(0)&=f
\end{split}
\end{align}
has a unique classical solution on $X$ given by 
\begin{align*}
u: [0,\infty) \to X,~ t \mapsto S_tf,
\end{align*}
meaning that $u$ is continuously differentiable with respect to $X$, $u(t)\in\mathcal{D}\left(A\right)$ and \eqref{classical_heat_equation} holds for all $t\geq 0$.
\end{lem}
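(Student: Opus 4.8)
The plan is to follow the classical semigroup-theoretic argument, splitting into existence and uniqueness.

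For existence I would invoke the fundamental regularity property of strongly continuous semigroups: if $f\in\mathcal{D}(A)$, then $S_tf\in\mathcal{D}(A)$ for every $t\geq 0$, the map $t\mapsto S_tf$ is continuously differentiable with respect to $X$, and
\begin{align*}
\frac{d}{dt}S_tf = AS_tf = S_tAf, \qquad t\geq 0.
\end{align*}
Taking $u(t)=S_tf$ then immediately gives $u(0)=f$, $u(t)\in\mathcal{D}(A)$, and $\frac{\partial u}{\partial t}(t)=Au(t)$ for all $t\geq 0$; continuity of the derivative follows because $t\mapsto S_tAf$ is continuous by strong continuity of $(S_t)_{t\geq 0}$ applied to the fixed vector $Af\in X$. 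This shows $u(t)=S_tf$ is a classical solution.

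For uniqueness let $v$ be any classical solution and fix $t>0$. I would introduce the auxiliary curve $w:[0,t]\to X$, $w(s)\coloneqq S_{t-s}v(s)$, and show that $w$ is differentiable on $(0,t)$ with $w'(s)=0$. Writing the difference quotient and using $v(s)\in\mathcal{D}(A)$, one splits
\begin{align*}
\frac{w(s+h)-w(s)}{h} = S_{t-s-h}\,\frac{v(s+h)-v(s)}{h} + \frac{S_{t-s-h}-S_{t-s}}{h}\,v(s);
\end{align*}
the first term converges to $S_{t-s}v'(s)=S_{t-s}Av(s)$ (using uniform boundedness of $(S_r)_{r\in[0,t]}$ together with $v'(s)=Av(s)$), and the second converges to $-AS_{t-s}v(s)=-S_{t-s}Av(s)$, where one uses that $v(s)\in\mathcal{D}(A)$ so that $r\mapsto S_rv(s)$ is differentiable. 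Hence $w'(s)=0$ on $(0,t)$, and by continuity of $w$ on $[0,t]$ it is constant, so $w(0)=w(t)$, i.e.\ $S_tf=v(t)$. Since $t>0$ was arbitrary and $v(0)=f=S_0f$, we conclude $v=u$.

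The main obstacle is the differentiation of $w(s)=S_{t-s}v(s)$: one must treat a ``product'' of the merely strongly continuous operator family $(S_r)_r$ with the differentiable curve $v$ and justify passing to the limit in both terms of the difference quotient. This is handled by the uniform boundedness of $(S_r)_{r\in[0,t]}$, a consequence of strong continuity, to control the first term, and by the $\mathcal{D}(A)$-regularity of $v(s)$ — guaranteed by the equation $v'(s)=Av(s)$ — to identify the derivative of $r\mapsto S_rv(s)$ in the second term. The remainder is routine, and for the application in this paper one simply instantiates $X=(C[0,1])_{\mu}^b$, $A=\bar\Delta_{\mu}^b$, $(S_t)_{t\geq 0}=(\bar T_t^b)_{t\geq 0}$ from Corollary \ref{semigroup_cor}.
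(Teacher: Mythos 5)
Your proof is correct: it is the standard semigroup-theoretic argument (differentiability of the orbit map $t\mapsto S_tf$ for $f\in\mathcal{D}(A)$ giving existence, and constancy of the auxiliary curve $s\mapsto S_{t-s}v(s)$ giving uniqueness), and all the limit interchanges you flag are handled correctly via uniform boundedness of $(S_r)_{r\in[0,t]}$ and the $\mathcal{D}(A)$-regularity of $v(s)$. The paper does not prove this lemma itself but simply cites it as \cite[Proposition 6.2]{ENO}, whose proof is exactly the argument you reproduce, so your approach coincides with the paper's (outsourced) one.
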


Let $T>0$ and $f\in\mathcal{D}\left(\bar\Delta_{\mu}^b\right)$. Theorem \ref{strong_semigroup_con_theorem} implies that the classical solution to
\begin{align*}
\frac{\partial u_n}{\partial t}(t) &= \bar\Delta_{\mu_n}^b u_n(t),\\
u_n(0)&=\pi_n f
\end{align*} 
converges uniformly for $(t,x)\in[0,T]\times[0,1]$ to the classical solution to
\begin{align*}
\frac{\partial u}{\partial t}(t) &= \bar\Delta_{\mu}^b u(t),\\
u(0)&=f
\end{align*}
as $n\to\infty$, assuming that $\pi_n f\in \mathcal{D}\left(\bar\Delta_{\mu_n}^b\right)$. However, the assumption $f\in\mathcal{D}\left(\bar\Delta_{\mu}^b\right)$ and $\pi_n f\in \mathcal{D}\left(\bar\Delta_{\mu_n}^b\right)$ for all $n\in\mathbb{N}$ is very restrictive, as the following example illustrates.
\begin{exa}
Let $\mu$ be a measure according to our conditions such that $\supp(\mu)$ is a $\lambda^1$-zero set and assume that $\supp(\mu_n)=[0,1]$ for all $n\in\mathbb{N}$. Further, let $f\in
\mathcal{D}\left(\bar\Delta_{\mu}^b\right)$. Then, on any interval $I\subseteq[0,1]\setminus \supp(\mu)$, $\pi_nf$ is linear. Now, if we assume that $\pi_nf\in\mathcal{D}\left(\bar\Delta_{\mu_n}^b\right)$, then $\bar\Delta_{\mu_n}^b f(x) = 0$, $x\in I$ and thus $\bar\Delta_{\mu_n}^bf=0\in \left(C[0,1]\right)^b_{\mu_n}$. If $b=D$, we obtain $\pi_nf=0\in \left(C[0,1]\right)^D_{\mu_n}$ and thus $f=0\in \left(C[0,1]\right)^b_{\mu}$ and if $b=N$, $(\pi_nf)'=0\in C[0,1]$ and thus $f'=0\in \left(C[0,1]\right)^N_{\mu}$.
\end{exa} 

This motivates the following solution concept (see \cite[Proposition 6.4]{ENO}).

\begin{defi}
Let $X$ be a Banach space, $A: X\to X$ and $f\in X$. We call a map $u: [0,\infty)\to X,~ t\mapsto u(t)$ a solution to 
the abstract heat equation
\begin{align}\label{mild_heat_equation}
\begin{split}
\frac{du}{d t}(t) &= Au(t), ~ t\geq 0,\\
u(0)&=f
\end{split}
\end{align}
if $u$ is continuous with respect to $X$ for $t\geq 0$, $u(t)\in \mathcal{D}(A)$ for all $t>0$ and $\lim_{h\to 0}\frac{u(t+h)-u(t)}{h}= Au(t)$ with respect to $X$ for $t>0$.
\end{defi}

%\begin{lem}
%Let $A$ be the generator of a strongly continuous semigroup $\left(S_t\right)_{t\geq 0}$ on a Banach space $X$ and $f\in X$. Then, there exists a unique solution to the abstract heat equation \eqref{mild_heat_equation} given by
%$
%u(t)=S_tf$ for $t\geq 0.
%$
%\end{lem}
%\begin{proof}
%Since $\left(S_t\right)_{t\geq 0}$ is strongly continuous, $u$ is continuous with respect to X. Further, since $\left(S_t\right)_{t\geq 0}$ is a semigroup with generator A, it holds $\lim_{h\to 0}\frac{T_{t+h}f-T_tf}{h}= AT_tf$  for $t\geq 0$

%\end{proof}
%Note that usually, the mild solution is a more general concept (see \cite[Definition 6.3]{ENO}). However, in the considered case, the mild solution as defined in  \cite[Definition 6.3]{ENO} is also a mild solution as defined above, so that we do not need to introduce the more general concept. 

Using this solution concept, we can establish the desired convergence for any initial condition in the appropriate space.

\begin{thm}\label{heat_theorem}
Let $f\in (C[0,1])_{\mu}^b$ and let $(\mu_n)_{n\in\mathbb{N}}$ satisfy Assumption~\ref{ass:1}. Further, let $\{u(t):t\geq 0\}$ be the unique solution to
\begin{align}\label{heat_final_1}
\begin{split}
\frac{d u}{d t}(t) &= \bar\Delta_{\mu}^b u(t), ~ t\geq 0,\\
u(0)&=f
\end{split}
\end{align}
and let for $n\geq 1$ $\{u_n(t):t\geq 0\}$ be the unique solution to
\begin{align}\label{heat_final_2}
\begin{split}
\frac{d u_n}{d t}(t) &= \bar\Delta_{\mu_n}^b u_n(t), ~ t\geq 0,\\
u_n(0)&=\pi_n f.
\end{split}
\end{align} 
Then,
\begin{align}\label{heat_final_3}
\lim_{n\to\infty} \left\lVert \pi_n u(t)- u_n(t)\right\rVert_{\infty}=0,
\end{align}
uniformly on bounded time intervals.
\end{thm}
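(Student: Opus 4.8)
The plan is to reduce Theorem~\ref{heat_theorem} to the strong semigroup convergence already established in Theorem~\ref{strong_semigroup_con_theorem}. The first step is to identify the solutions appearing in \eqref{heat_final_1} and \eqref{heat_final_2} with semigroup orbits. Since $\bar\Delta_{\mu}^b$ generates the strongly continuous contraction semigroup $\left(\bar T_t^b\right)_{t\geq 0}$ on $(C[0,1])_{\mu}^b$ (Lemma~\ref{restricted_lem_i} together with Corollary~\ref{semigroup_cor}), standard semigroup theory (as in \cite{ENO}, in the same spirit as \cite[Proposition 6.4]{ENO}) guarantees that $u(t)=\bar T_t^b f$ is \emph{a} solution of \eqref{heat_final_1} in the sense of the above definition: $t\mapsto \bar T_t^b f$ is continuous into $(C[0,1])_{\mu}^b$ for $t\geq 0$, $\bar T_t^b f\in\mathcal{D}\left(\bar\Delta_{\mu}^b\right)$ for $t>0$ by \eqref{semigroup_range}, and the differential equation holds for $t>0$. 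Likewise $u_n(t)=\bar T_{t,n}^b\pi_n f$ solves \eqref{heat_final_2}. For this to give \emph{the} unique solution, I would include a short uniqueness argument: if $v$ is another solution, then for fixed $t>0$ the map $s\mapsto \bar T_{t-s}^b v(s)$ on $(0,t)$ is differentiable with derivative zero (using that $v(s)\in\mathcal{D}\left(\bar\Delta_{\mu}^b\right)$ and the product rule for the semigroup acting on a differentiable orbit), hence constant, and letting $s\downarrow 0$ and $s\uparrow t$ yields $v(t)=\bar T_t^b v(0)=\bar T_t^b f$; continuity at $t=0$ handles the endpoint.

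Once the solutions are identified, \eqref{heat_final_3} reads exactly
\begin{align*}
\lim_{n\to\infty}\left\lVert \pi_n \bar T_t^b f - \bar T_{t,n}^b \pi_n f\right\rVert_{\infty} = 0
\end{align*}
uniformly on bounded time intervals, which is precisely the content of Theorem~\ref{strong_semigroup_con_theorem}. So after the identification step the proof is immediate.

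The main obstacle, and the place where a little care is genuinely needed, is the uniqueness claim and the subtle point that the definition of ``solution'' only asks for the differential equation to hold for $t>0$ (not $t=0$) together with continuity up to $t=0$; one must check that the semigroup orbit really satisfies this weakened notion (it does, since differentiability of $t\mapsto\bar T_t^b f$ at $0$ can fail when $f\notin\mathcal{D}\left(\bar\Delta_{\mu}^b\right)$, which is exactly why the weaker concept was introduced) and that the uniqueness argument via $s\mapsto \bar T_{t-s}^b v(s)$ is valid under only these hypotheses. Everything else — boundedness of $\pi_n$, the contraction and strong-continuity properties, and the approximation of $\mathcal{D}\left(\bar\Delta_{\mu}^b\right)$-elements from $\mathcal{D}\left(\bar\Delta_{\mu_n}^b\right)$ — has already been done in Theorem~\ref{graph_norm_conv} and Theorem~\ref{strong_semigroup_con_theorem}, so no further estimates are required.
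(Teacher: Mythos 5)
Your proposal is correct and follows the paper's overall skeleton: identify $u(t)=\bar T_t^b f$ and $u_n(t)=\bar T_{t,n}^b\pi_n f$ as the unique solutions of \eqref{heat_final_1} and \eqref{heat_final_2}, then read off \eqref{heat_final_3} from Theorem \ref{strong_semigroup_con_theorem}. The existence part is the same in both arguments (membership $u(t)\in\mathcal{D}\left(\bar\Delta_{\mu}^b\right)$ for $t>0$ via \eqref{semigroup_range}, continuity from Corollary \ref{semigroup_cor}, and the derivative identity from the generator property). Where you genuinely diverge is the uniqueness step. You run the classical abstract argument inside the Banach space $(C[0,1])_{\mu}^b$: for a solution $v$ and fixed $t>0$, the map $s\mapsto \bar T_{t-s}^b v(s)$ has vanishing derivative on $(0,t)$, hence is constant, and the two one-sided limits give $v(t)=\bar T_t^b f$. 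This is valid, but it is the step that needs the most care — you must justify differentiating the composite map (splitting the difference quotient into $\bar T_{t-s-h}^b\bigl(v(s+h)-v(s)\bigr)/h$ and $\bigl(\bar T_{t-s-h}^b-\bar T_{t-s}^b\bigr)v(s)/h$, using the contraction property and $v(s)\in\mathcal{D}\left(\bar\Delta_{\mu}^b\right)$), and you should make that computation explicit rather than gesture at "the product rule". The paper avoids this entirely by a cheaper reduction: since $\left\lVert\cdot\right\rVert_{\mu}\leq\left\lVert\cdot\right\rVert_{\infty}$, any solution of \eqref{heat_final_1} in $(C[0,1])_{\mu}^b$ is also a solution of the corresponding Cauchy problem for $\Delta_{\mu}^b$ on $L^2([0,1],\mu)$, where uniqueness is already known from the self-adjoint Hilbert-space theory. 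Your route is self-contained and does not lean on the $L^2$ theory; the paper's route is shorter and delegates the analytic work to a cited result. Either is acceptable, but as written your uniqueness paragraph is a sketch of the hardest step rather than a proof of it.
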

\begin{proof}
First, we show that $t\mapsto \bar T_t^b f$ is a solution to \eqref{heat_final_1}. 
Let $t>0$. By \eqref{semigroup_range} we have for any $k\in\mathbb{N}$ \[u(t)= \bar T_t^b f = T_t^b f\in\mathcal{D}\left(\left(\Delta_{\mu}^b\right)^k\right).\] It follows that $\Delta_{\mu}^b u(t)\in\mathcal{D}\left(\Delta_{\mu}^b\right)$ and especially $\Delta_{\mu}^b u(t)\in (C[0,1])_{\mu}^b$, which implies $u(t)\in \mathcal{D}\left(\bar \Delta_{\mu}^b\right)$. 
From the strong continuity of $\left(\bar T_t^b\right)_{t\geq 0}$ along with the semigroup property we get the continuity of $u$ with respect to $(C[0,1])_{\mu}^b$. Further, since $\bar \Delta_{\mu}^b$ is the infinitesimal generator of $\left(\bar T_t^b\right)_{t\geq 0}$,
\begin{align*}
\lim_{h\to 0}\frac{u(t+h)-u(t)}{h}=\lim_{h\to 0}\frac{\bar T_h^b \bar T_t^bf-\bar T_t^b f}{h}=\bar \Delta_{\mu}^b\bar T_t^b f = \bar \Delta_{\mu}^b u(t).
\end{align*}
For the proof of uniqueness, first note that the unique solution to
\begin{align}\label{heat_final_proof}
\begin{split}
\frac{d v}{d t}(t) &= \Delta_{\mu}^b v(t), ~ t \geq 0\\
v(0)&=f
\end{split}
\end{align}
on the Hilbert space $L^2([0,1],\mu)$ is given by $v(t)=T_t^b f$ (see \cite[Theorem B.2.6]{KA}). We now show that a solution to \eqref{heat_final_1}, which we denote by $u$, is also a solution to \eqref{heat_final_proof}.  The continuity with respect to $L^2([0,1],\mu)$ follows from\begin{align*}
\left\lVert u(t)-u(s)\right\rVert_{\mu}\leq \left\lVert u(t)-u(s)\right\rVert_{\infty}, ~~ s,t \geq 0.
\end{align*} 
Let $t>0$. We have $u(t)\in \mathcal{D}\left(\bar \Delta_{\mu}^b\right)$, which by definition implies that $u(t) \in \mathcal{D}\left( \Delta_{\mu}^b\right)$. 
Further,
\begin{align*}
\lim_{h\to 0}\left\lVert \frac{u(t+h)-u(t)}{h} - \Delta_{\mu}^b u(t)\right\rVert_{\mu}
&=\lim_{h\to 0}\left\lVert \frac{u(t+h)-u(t)}{h} - \bar\Delta_{\mu}^b u(t)\right\rVert_{\mu}\\
&\leq \lim_{h\to 0}\left\lVert \frac{u(t+h)-u(t)}{h} - \bar\Delta_{\mu}^b u(t)\right\rVert_{\infty}\\&=0.
\end{align*}
Therefore, $u$ is a solution to  \eqref{heat_final_proof}. This proves the  uniqueness. We can follow the same arguments to verify that  $\bar T_{t,n}^b \pi_n f$ is the unique solution to \eqref{heat_final_2} for $n\in\mathbb{N}$. Then, \eqref{heat_final_3} is a direct consequence of Theorem \ref{strong_semigroup_con_theorem}.
\end{proof}

\section{Applications}\label{Application}

\begin{exa}\label{exa1} {\rm
As a first application, we consider a non-atomic Borel probability measure $\mu$ on $[0,1]$ such that $0,1\in\supp(\mu)$ and $\supp(\mu)\neq [0,1]$. We define for $\varepsilon\in(0,1)$ the approximating probability measure $\mu_{\varepsilon}$ by
\begin{align*}
\mu_{\varepsilon} \coloneqq \frac{\mu+\varepsilon\lambda^1}{1+\varepsilon}.
\end{align*}
It is elementary that $\mu_{\varepsilon}$ converges weakly to $\mu$ as $\varepsilon\to 0$ 
%Let $F$ be the distribution function of $\mu$ and $F_{\varepsilon}$ be the distribution function of $\mu_{\varepsilon}$.
%We have for $\varepsilon>0,$ $x\in[0,1]$
%\begin{align*}
%\left|F(x)-F_{\varepsilon}(x)\right| &= 
%\left|F(x)-\frac{F(x)+\varepsilon x}{1+\varepsilon}\right| \\
%&= 
%\left|\frac{(1+\varepsilon)F(x)}{1+\varepsilon}-\frac{F(x)+\varepsilon x}{1+\varepsilon}\right|\\
%&=\left|\frac{\varepsilon F(x)-\varepsilon x}{1+\varepsilon}\right|\\
%&\leq \frac{\varepsilon}{1+\varepsilon}.
%\end{align*}
%Consequently, $\left\lVert F-F_{\varepsilon}\right\rVert_{\infty}\to 0$ as $\varepsilon\to 0$
and Theorem \ref{heat_theorem} is applicable. Let $b\in\{N,D\}$ and $f\in (C[0,1])_{\mu}^b$. Then, the unique solution $\{u_{\varepsilon}(t):t\geq 0\}$  to
\begin{align*}
\begin{split}
\frac{d u_{\varepsilon}}{d t}(t) &= \bar\Delta_{\mu_{\varepsilon}}^b u_{\varepsilon}(t),\\
u_{\varepsilon}(0)&=\pi_{\varepsilon} f,
\end{split}
\end{align*} 
where $\pi_{\varepsilon}: (C[0,1])_{\mu}^b\to (C[0,1])_{\mu_{\varepsilon}}^b$ is an embedding as previously defined (see \eqref{embedding}),
converges to the unique solution $\{u(t):t\geq 0\}$ to
\begin{align*}
\begin{split}
\frac{d u}{d t}(t) &= \bar\Delta_{\mu}^b u(t),\\
u(0)&=f
\end{split}
\end{align*}
with respect to the uniform norm as $\varepsilon$ tends to zero.} 
\end{exa}
%The previous example provides a physical interpretation according to Section \ref{Introduction} if $\supp(\mu)\neq[0,1]$ and $\mu$ has a Lebesgue density. However, it is still not clear how to interpret the heat equation if $\mu$ has no Lebesgue density, as if $\mu$ is a Cantor measure. The measure $\frac{\mu+\varepsilon\lambda^1}{1+\varepsilon}$ has full support, but it doesn't have a density either. This motivates the second application:\par 
In the previous example, $\mu$ could be chosen to be an absolutely continuous measure, for example  $\lambda^1_{|_{\left[0,\frac{1}{3}\right]\cup\left[\frac{2}{3},1\right]}}$, or to be a singular measure, as a self-similar measure on the Cantor set. Furthermore, it is not required that the approximating measures have full support. 
%There other ways to approximate self-similar measures on the Cantor set.
\begin{figure}[t]
\centering
\includegraphics[scale=0.5]{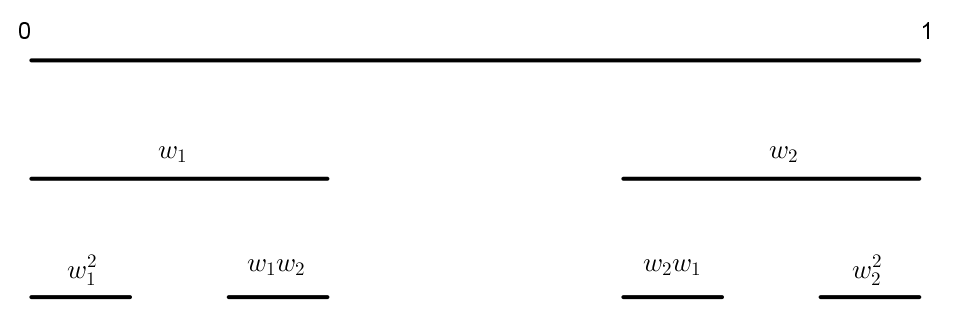}
\caption{Approximating Cantor measures of levels $n=0,1,2.$}
\label{cantor_measure}
\end{figure}

\begin{exa}\label{exa2}{\rm
Let $w_1,w_2\in(0,1)$ such that $w_1+w_2=1$ and let $\mu$ be the unique invariant Borel probabiliy measure on $[0,1]$ given by the IFS consisting of $S_1(x)=\frac{x}{3}$ and $S_2(x)=\frac{2}{3}+\frac{x}{3}, x\in[0,1]$ and weights $w_1$ and $w_2$, i.e. $\mu$ is a so-called Cantor measure. Following \cite{FME}, for $n\in\mathbb{N}$ we define the approximating Cantor measures of level $n$ by
\begin{align*}
\mu_n(B) \coloneqq 3^n\sum_{x\in\{1,2\}^n} \lambda^1_{|_{I_x}}\prod_{i=1}^n \omega_{x_i}, ~ B\in B([0,1]),
\end{align*}
where $I_x\coloneqq \left(S_{x_1}\circ...\circ S_{x_n}\right)([0,1]),~ x\in\{1,2\}^n$. The approximating Cantor measures of levels $n=0,1,2$ are illustrated in Figure \ref{cantor_measure}. We denote the distribution function of $\mu$ by $F$ and the distribution function of $\mu_n$ by $F_n$ for $n\in\mathbb{N}$. Then, 
$\left\lVert F-F_n\right\rVert_{\infty}\to 0$  (see \cite[Proposition 4.2]{FME}) as well as $\supp(\mu)\subset\supp(\mu_n)$ for $n\in\mathbb{N}$ and Theorem \ref{heat_theorem} can be applied. Hence,
for $f\in (C[0,1])_{\mu}^b$, the unique solution $\{u_n(t):t\geq 0\}$  to
\begin{align*}
\begin{split}
\frac{d u_n}{d t}(t) &= \bar\Delta_{\mu_n}^b u_n(t),\\
u_n(0)&=\pi_n f
\end{split}
\end{align*} 
converges to the unique solution $\{u(t):t\geq 0\}$ to
\begin{align*}
\begin{split}
\frac{d u}{d t}(t) &= \bar\Delta_{\mu}^b u(t),\\
u(0)&=f
\end{split}
\end{align*}
with respect to the uniform norm as $n$ tends to infinity.}
\end{exa}

Finally, we connect both applications. 

\begin{exa}{\rm
Let $\varepsilon>0,~ n\in\mathbb{N}$ and let $\mu$, $\mu_n$, $\{u(t):t\geq 0\}$ and $\{u_n(t):t\geq 0\}$ be defined as in Example \ref{exa2}. We define $\mu_{n,\varepsilon}$ by
\begin{align*}
\mu_{n,\varepsilon} \coloneqq \frac{\mu_n+\varepsilon\lambda^1}{1+\varepsilon},
\end{align*}
i.e. analogously to Example \ref{exa1}, and
%Then,
%\begin{align*}
%\lim_{n\to\infty}\lim_{\varepsilon\to 0}\left\lVert F-F_{n,\varepsilon}\right\rVert_{\infty}
%&\leq \lim_{n\to\infty}\lim_{\varepsilon\to 0} \left\lVert F-F_{n}\right\rVert_{\infty} + \left\lVert F_n-F_{n,\varepsilon}\right\rVert_{\infty}\\ & = \lim_{n\to\infty}\left\lVert F-F_{n}\right\rVert_{\infty}  = 0.
%\end{align*}
%Let $\{u(t):t\geq 0\}$ and $\{u_n(t):t\geq 0\},~n\in\mathbb{N}$ defined as in Example \ref{exa2} 
%\begin{align*}
%\begin{split}
%\frac{\partial u}{\partial t}(t) &= \bar\Delta_{\mu}^b u(t),\\
%u(0)&=f
%\end{split}
%\end{align*}
$\{u_{n,\varepsilon}(t):t\geq 0\}$ to be the solution to 
 \begin{align*}
\begin{split}
\frac{d u_{n,\varepsilon}}{d t}(t) &= \bar\Delta_{\mu_{n,\varepsilon}}^b u_{n,\varepsilon}(t),\\
u_{n,\varepsilon}(0)&=\pi_{n,\varepsilon} f,
\end{split}
\end{align*}
 where $ \pi_{n,\varepsilon}$ is an embedding as previously defined.  Further, let  $t\in[0,\infty)$ and $\delta>0$.  
 By Example \ref{exa2}, there exists $n_0\in\mathbb{N}$ such that for all $n\geq n_0$ it holds
  \begin{align*}
\left\lVert u(t) - u_{n}(t)\right\rVert_{\infty} <\frac{\delta}{2}.
\end{align*}
 By Example \ref{exa1}, for each $n\geq n_0$ there exists $\varepsilon_n>0$ such that for all $\varepsilon<\varepsilon_n$ it holds
 \begin{align*}
\left\lVert u_n(t) - u_{n,\varepsilon}(t)\right\rVert_{\infty} <\frac{\delta}{2}.
\end{align*}
Hence,  for all $n\geq n_0,~ \varepsilon<\varepsilon_n$ it holds 
\begin{align*}
\left\lVert u(t) - u_{n,\varepsilon}(t)\right\rVert_{\infty} <\delta.
\end{align*}
Hence, the heat on a rod with mass distribution given by a Cantor measure diffuses approximately like the heat on a rod possessing a strictly positive mass density which is small off the Cantor set.}
\end{exa}

\section{Directions for Further Research}\label{Directions for Further Research}

\begin{bem}{\rm
% Zeilenumbruch: ~\!
Consider the heat equation \eqref{heat_equation_intro} with initial value given by the Delta distribution $\delta_y: g\mapsto g(y)$ for $y\in\supp(\mu)$. Then, the heat kernel
\begin{align*}
p_t(x,y)=\sum_{k\geq 1}e^{-\lambda_k^b}\varphi_k^b(x)\varphi_k^b(y), ~ (t,x)\in[0,\infty)\times[0,1]
\end{align*}
solves the equation in the distributional sense, where $\left\{\lambda_k^b, ~ k\geq 1\right\}$ are the ascending ordered eigenvalues and $\left\{\varphi_k^b,~ k\geq 1\right\}$ the $L_2([0,1],\mu)$-normed eigenfunctions of $\Delta_{\mu}^b$ on $L_2([0,1],\mu)$. The heat kernel is of particular importance in the context of the associated Markov process (see the remark below) and stochastic partial differential equations (see \cite{ES,ESW}). It is an open question whether weak measure convergence implies convergence of the corresponding heat kernels in an appropriate sense. 
%It is well-known that $\delta_y$ can be approximated by continuous functions $f_m, ~ m\geq 1$. The approximation of the heat kernel of $\Delta_{\mu}^b$ by the solution to 
%\begin{align*}
%\begin{split}
%\frac{\partial u}{\partial t}(t) &= \Delta_{\mu}^b u(t), ~~ t\in[0,\infty),\\
%u(0)&=f_m
%\end{split}
%\end{align*}
%for $m\geq 1$ sufficiently large (and analogously for each approximating measure $\mu_n$ for $n\geq 1$) could be a way to approach this problem.}
}\end{bem}

\begin{bem}{\rm
The operator $\Delta_{\mu}^b$ on $L_2([0,1],\mu)$ is the infinitesimal generator of a Markov process, called a quasi-diffusion (see, e.g., \cite{IKD,KO,KS,LC}). Convergence of semigroups raises the question whether the associated Markov processes also converge weakly. If $\mu_n\rightharpoonup\mu$, our results imply that for each $f\in(C[0,1])_{\mu}^b$, $t\in[0,\infty)$ and each starting point $x\in[0,1]$
\begin{align*}
\mathbb{E}\left[f\left(X^b_n(t)\right)\right] = T_{t,n}^bf(x) \to T_{t}^b f (x)  = \mathbb{E}\left[f\left(X^b(t)\right)\right],~ n\to\infty, 
\end{align*}
where $X^b$ is associated to $\Delta_{\mu}^b$  and $X_n^b$ is associated to $\Delta_{\mu_n}^b$. This would need to be extended to a proof of 
%Generalizing that to all $f\in C[0,1]$, then proving 
convergence of all finite-dimensional distributions, and tightness would also be required, to establish that $X_n^b\to X^b$ weakly in the Skorokhod 
space of càdlàg functions.}
\end{bem}

\begin{bem}{\rm
Let $\mu$ be of full support. Consider the analgue of the wave equation
\begin{align*}
\begin{split}
\frac{d^2 u}{d t^2}(t) &= \Delta_{\mu}^b u(t), ~~ t\in[0,\infty)
\end{split}
\end{align*}
on $L^2([0,1],\mu)$. This hyperbolic equation describes the motion of a vibrating string with mass distribution $\mu$ such that, if it is deflected, a tension force drives it back towards its state of
equilibrium. If $\mu$ were not of full support, the string would have massless parts. It is not clear how to interpret massless parts of a string. We suppose that the motion of such a string behaves approximately like the motion of a string with very little mass on these gaps, analogous to our results about 
the diffusion of heat. 

Assume that $u(0)\in \mathcal{D}\left(\Delta_{\mu}^b\right)$ and, for reasons of simplicity, that the initial velocity vanishes. Then, there exists a 
unique solution on $L_2([0,1],\mu)$ given by $u(t)=C(t)u(0)$, $t\geq 0$, where $\left\{ C(t):t\geq 0\right\}$ denotes the strongly continuous cosine family 
of $\Delta_{\mu}^b$ (see, e.g., \cite{WA}). We have already shown that $\mu_n\rightharpoonup\mu$ implies strong resolvent convergence of the corresponding 
operators restricted to continuous functions. It is well-known that this implies convergence of the corresponding cosine families 
$\left\{ C_n(t):t\geq 0\right\}$, which implies convergence of the solutions to the corresponding wave equation, provided that there exists $M>0$ and $w\geq 0$ such that for all $n\geq 1$, $t\geq 0$ 
  $\left\lVert C_n(t)\right\rVert \leq Me^{w|t|}$ (see \cite{GO}). Proving that the restriction of $C(t)$ to 
$(C[0,1])_{\mu}^b$ is the cosine family of $\bar\Delta_{\mu}^b$ (and analogously for $\mu_n)$ and verifying the above estimate would be a way to establish 
the desired convergence of solutions to the wave equation.
}
\end{bem}

\end{document}